 \theoremstyle{plain}
 \newtheorem{theorem}{Theorem}
 \newtheorem{corollary}[theorem]{Corollary}
 \newtheorem{lemma}[theorem]{Lemma}
 \theoremstyle{remark}
 \numberwithin {equation}{section}
\begin{document}
\title[Extension theorems]{Extension theorems for the Fourier transform
associated with non-degenerate
quadratic surfaces in vector spaces over finite fields}
\author{Alex Iosevich and Doowon Koh}

\address{Mathematics Department\\
202 Mathematical Sciences Bldg\\
University of Missouri\\
Columbia, MO 65211 USA}
\email{iosevich@math.missouri.edu}

\address{Mathematics Department\\
202 Mathematical Sciences Bldg\\
University of Missouri\\
Columbia, MO 65211 USA}

\email{koh@math.missouri.edu}



\begin{abstract}
We study the restriction of the Fourier transform to quadratic surfaces
in vector spaces over finite fields. In two dimensions, we obtain the
sharp result  by considering the sums of arbitrary two elements in the subset of quadratic
surfaces on two dimensional vector spaces over finite fields. For higher  dimensions,
we estimate the decay of the Fourier transform of the characteristic functions on quadratic surfaces 
so that we obtain the Tomas-Stein exponent. Using incidence theorems, 
we also study the extension theorems in the  restricted settings  
to sizes of sets in quadratic surfaces. Estimates for Gauss and Kloosterman
sums  and their variants play an important role.
\end{abstract}

\maketitle

\input epsf


\section{Introduction}
Let $S$ be a subset of ${\Bbb R}^d$ and $d\sigma$ a positive measure
supported on $S$. Then one may ask that for which values of $p$ and $r$ does the estimate
\begin{equation}\label{Boundedness} \|\widehat{fd\sigma}\|_{L^r({\Bbb R}^d)}
\le C_{p,r} \|f\|_{L^p(S,d\sigma)}\quad \text{for all} \quad 
 f \in L^p(S,d\sigma)\end{equation}
hold? This problem is known as the extension theorems.
See, for example, \cite{Fe70},\cite{Zy74},\cite{Ste93},\cite{Gr03},\cite{Ta03}, and the
references contained therein on recent progress related to this problem and its analogs.
In the case when $p=2$ in \eqref{Boundedness}, Strichartz (\cite{Str77}) gave a
complete solution when $S$  is a quadratic surface given by $S=\{x\in {\mathbb R}^d \colon Q(x)=j\}$,  
where $ Q(x)$ is a polynomial  of degree of two with real coefficients and $j$ is a real constant. 
In  this paper, we study the analogous  extension operators given by quadratic forms in the finite field setting,  
building upon earlier work of Mockenhaupt and Tao (\cite{MT04}) for the paraboloid in vector spaces  over finite fields.  
We begin with some notation and definitions to describe our main results.
Let ${\Bbb F}_q$ be a finite field of characteristic $char({\Bbb F}_q) > 2$ with $q$ elements, 
and let  ${\Bbb F}_q^d $ be a $d$-dimensional vector space over ${\Bbb F}_q$. Given  a function  
$f: {\Bbb F}_q^d \rightarrow {\Bbb C}, d\ge 1$,   
define the Fourier transform of $f$ by the  formula
\[ \widehat{f}(m)=q^{-d} \sum_{x\in {\Bbb F}_q^d}\chi(-x\cdot m) f(x)\]
where $\chi$ is a non-trivial additive character on ${\Bbb F}_q.$
When $\Bbb F_q = \Bbb Z/q\Bbb Z$ for some prime $q$ , we could take
$\chi(t)= e^{2\pi it/q}$, and the calculations in the paper are independent of the exact choice of the character.
Recall  that the Fourier inversion theorem is given by
\[f(x)=\sum_{m\in {\Bbb F}_q^d}\chi(x\cdot m)\widehat{f}(m).\] 
Also recall that the Plancherel theorem says in this context that
\[ \sum_{m\in {\Bbb F}_q^d} |\widehat{f}(m)|^2 =q^{-d}\sum_{x\in {\Bbb F}_q^d}|f(x)|^2.\]

Let $S \subset {\Bbb F}_q^d$ be an algebraic variety in ${\Bbb F}_q^d.$ We
denote by $d\sigma$ normalized surface measure on $S$ defined by the  relation
\[ \widehat{fd\sigma}(m) = \frac{1}{\#S}\sum_{x\in S} \chi(-x\cdot m)f(x),\] 
where $\# S$ denotes the number of elements in $S$. In other  words,
\[ q^{-d} \cdot \sigma(x)={(\# S)}^{-1} \cdot S(x).\]
Here, and throughout the paper, E(x) denotes the characteristic function , 
$\chi_E,$ of  the subset  $E$ of ${\Bbb F}_q^d.$ We therefore denote by $Ed\sigma$ the measure $\chi_Ed\sigma.$

For $1\le p,r <\infty$, define
\[\|f\|_{L^p \left({\Bbb F}_q^d,dx \right)}^p=q^{-d}\sum_{x\in {\Bbb 
F}_q^d}|f(x)|^p,\]
\[\|\widehat{f}\|_{L^r \left({\Bbb F}_q^d,dm \right)}^r = \sum_{m\in
{\Bbb F}_q^d}|\widehat{f}(m)|^r\]
 and
\[ \|f\|_{L^p \left(S,d\sigma \right)}^p=\frac{1}{\#S}\sum_{x\in S}|f(x)|^p.\] 
Similarly, denote by  $\|f\|_{L^\infty}$ the maximum value of $f$.

Observe that the measure on the "space" variables, $dx$, is the normalized
 measure obtained by dividing the counting measure by $q^d$, whereas the
 measure on the "phase" variables, $dm$, is just the usual counting
 measure. These normalizations are chosen in such a way that the Plancherel
 inequality takes the familiar form
\[{||\widehat{f}||}_{L^2({\Bbb F}_q^d,dm)}={||f||}_{L^2({\Bbb F}_q^d,dx)}.\]

We now define the non-degenerate quadratic surfaces in ${\Bbb F}_q^d$ in
the usual way. Let $x=(x_1,x_2,\cdots,x_d) \in {\Bbb F}_q^d.$ Denote by
$Q(x)$ a homogeneous polynomial in ${\Bbb F}_q[x_1,\cdots,x_d]$ of degree
2. Since $char({\Bbb F}_q) > 2$ throughout this paper, we can express
$Q(x)$ in the form
\[Q(x_1,x_2,\cdots,x_d) = \sum_{i,j=1}^d a_{ij}x_i x_j \quad \mbox{with}
\quad \ a_{ij}=a_{ji}.\]

If the $d \times d$ matrix $\{ a_{ij} \}$ is invertible, we say that the
Polynomial  $Q(x)$ is a non-degenerate quadratic form over ${\Bbb F}_q$. 
For each  $j\in {\Bbb F}_q^* ={\Bbb F}_q\setminus\{0\}$, the multiplicative group of
${\Bbb F}_q$, consider a set $S_j$ in ${\Bbb F}_q^d$ given by
\begin{equation}\label{surface}S_j=\{x\in {\Bbb F}_q^d: Q(x_1,\cdots,x_d)
=j\}, \end{equation}
where $Q(x)$ is a non-degenerate quadratic form. We call such a set $S_j$
a non-degenerate quadratic surface in ${\Bbb F}_q^d$. For example, the
sphere
\[S^{d-1}=\{x \in {\Bbb F}_q^d: x_1^2+x_2^2+\cdots +x_d^2=1\}\] 
is a non-degenerate quadratic surface in ${\Bbb F}_q^d.$

\subsection{Extension theorems and main results of this paper} Let $1\leq p,r\leq
\infty$. We define $R^*(p\rightarrow r)$ to be the best constant such that
the extension estimate
\[\|\widehat{fd\sigma}\|_{L^r ({\Bbb F}_q^d,dm)}\leq R^*(p\rightarrow r)
\|f\|_{L^p (S_j,d\sigma)}\] 
holds  for all functions $f$ on $S_j.$ The main goal of this paper is to
determine the set of exponents  $p$ and $r$ such that
\[R^*(p\rightarrow r)\leq C_{p,r}<\infty,\] 
where $C_{p,r}$ is independent  of the size of  ${\Bbb F}_q$. We note that
\begin{equation}\label{fact} R^*(p_1\rightarrow r) \le R^*(p_2\rightarrow
r) \quad \mbox{for}\quad \ p_1\ge p_2,\end{equation}
and
\[R^*(p\rightarrow r_1)\le R^*(p\rightarrow r_2) \quad \mbox{for}\quad \
r_1\ge r_2, \] 
which will allow us to reduce the analysis below to certain  endpoint estimates.

Let $S$ be an algebraic variety in ${\Bbb F}_q^d$ with $\#S \approx q^k$
for  some $0<k<d$. Here, and throughout the paper, 
$X \lesssim Y$ means that there exists $C>0$, independent of
$q$ such that $X \leq CY$, and $X \approx Y$ means both $X \lesssim Y$ and $Y \lesssim X.$  
Mockenhaupt and Tao  (\cite{MT04}) proved that $R^*(p\to r)$ is uniformly bounded  
($O(1)$ with constants independent of the size of ${\Bbb F}_q$) only if
\begin{equation}\label{conjecture} r\ge \frac{2d}{k}  \quad\mbox{and}\quad
r\ge  \frac{dp}{k(p-1)}.\end{equation}
For the detailed proofs of these assertions, see (\cite{MT04}, pages 41-42). 

Mockenhaupt and Tao also showed that $R^*(2\rightarrow r)$ is uniformly
bounded whenever 
\begin{equation}\label{Tomas-Stein} r\ge \dfrac{2d+2}{d-1} \end{equation}
if
\[S=\{(x,x\cdot x): x\in F_q^{d-1}\},\] 
an analog of the Euclidean  paraboloid. Moreover, when $d=3$ and $-1$ is not a square 
in ${\Bbb F}_q$, they  improved the result in (\ref{Tomas-Stein}) by showing that 
for each  $\varepsilon >0$ there exists $ C_{\epsilon}>0$ such that  
\begin{equation} \label{mtincidence} R^*\left(\frac{8}{5} \rightarrow 4  \right) \lesssim 1 
\quad  \mbox{and}\quad R^*\left(2\rightarrow \frac{18}{5} \right)
\leq C_{\epsilon} q^{\epsilon}. \end{equation}

If we replaced the paraboloid by a general non-degenerate quadratic
surface, the extension problem becomes more complicated, in part because
the Fourier transform of quadratic surfaces cannot be computed by simply
considering the Gauss sums, as was pointed out by the authors in
\cite{MT04}. Using generalized Kloosterman sums, we estimate the decay of
the Fourier transform of non-degenerate  quadratic surfaces. As a result, 
we obtain Theorem \ref{Main3} below which  gives the same exponents as in (\ref{Tomas-Stein})
(see the FIGURE \ref{Picture2}).
\begin{theorem}\label{Main3} Let $S_j$ be a non-degenerate quadratic
surface in  ${\Bbb F}_q^d$ defined as in (\ref{surface}). If $d\ge2$ and $r\ge
\frac{2d+2}{d-1}$, then
\[R^*(2\rightarrow r) \lesssim 1.\] \end{theorem}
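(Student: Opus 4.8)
The plan is to reduce the theorem to the classical Tomas--Stein argument, which requires only two ingredients: the trivial $L^1 \to L^\infty$ bound for the extension operator, and a decay estimate for the Fourier transform of the surface measure $d\sigma$ on $S_j$. By the standard $TT^*$ formulation, $R^*(2\to r) \lesssim 1$ for $r = \frac{2d+2}{d-1}$ is equivalent to the bound $\|(fd\sigma)^{\vee} * (fd\sigma)^{\vee}\|_{L^{r/2}}\lesssim \|f\|_{L^2(S_j,d\sigma)}^2$, or dually to $\|g * \widehat{d\sigma}\|_{L^{r'}(S_j,d\sigma)} \lesssim \|g\|_{L^{r'}({\Bbb F}_q^d,dm)}$; interpolating the $L^1\to L^\infty$ estimate (which comes from $\|\widehat{d\sigma}\|_\infty \le \#S_j^{-1}\cdot\#S_j = 1$, essentially trivially, since $\widehat{fd\sigma}(m)$ is an average) against an $L^2$-based estimate powered by the pointwise decay of $\widehat{d\sigma}$ yields exactly the exponent $\frac{2d+2}{d-1}$. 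So the whole theorem comes down to showing
\begin{equation}\label{decaygoal}
|\widehat{d\sigma}(m)| \lesssim q^{-(d-1)/2} \quad \text{for all } m \neq 0,
\end{equation}
together with keeping track of the fact that $\#S_j \approx q^{d-1}$ for a non-degenerate quadratic form with $j\neq 0$.

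The first concrete step is to compute $\widehat{d\sigma}(m) = \#S_j^{-1}\sum_{x\in S_j}\chi(-x\cdot m)$ by writing the indicator of $S_j$ via the orthogonality relation $\#\{x: Q(x)=j\}$-style expansion: $S_j(x) = q^{-1}\sum_{t\in {\Bbb F}_q}\chi(t(Q(x)-j))$. Substituting and exchanging sums, the $t=0$ term gives the "trivial" contribution $q^{d-1}/\#S_j$ times $\delta_0(m)$-type behavior, and for $t\neq 0$ one is left with a sum of the form $q^{-1}\sum_{t\neq 0}\chi(-tj)\sum_{x\in{\Bbb F}_q^d}\chi(tQ(x) - x\cdot m)$. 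After diagonalizing the non-degenerate form $Q$ (possible since $\mathrm{char}\,{\Bbb F}_q > 2$), the inner sum factors into a product of $d$ one-dimensional complete Gauss sums $\sum_{x_i}\chi(ta_i x_i^2 - x_i m_i)$, each of which can be completed to a square and evaluated in closed form as a Gauss sum times a quadratic phase in $m_i/t$. Collecting these, $\widehat{d\sigma}(m)$ becomes $\#S_j^{-1}q^{-1}$ times $q^{d/2}$ (from the $d$ Gauss sums) times a one-dimensional sum over $t\in{\Bbb F}_q^*$ of $\chi$ of a rational function of $t$ — precisely a Kloosterman-type or Salié-type sum, depending on the parity of $d$ and on whether $\mathrm{disc}(Q)$ is a square.

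The main obstacle — and the reason "generalized Kloosterman sums" are invoked in the introduction — is the estimation of that remaining one-dimensional exponential sum $\sum_{t\in{\Bbb F}_q^*} \psi(t)\chi(c/t + c' t)$, where $\psi$ is a quadratic multiplicative character (possibly trivial) coming from the Gauss sum normalizations. When $m\cdot m$ (relative to the dual form) is nonzero this is a genuine Kloosterman sum and Weil's bound gives $O(q^{1/2})$; when it degenerates (e.g. the relevant coefficient vanishes) one gets a pure Gauss sum of size $q^{1/2}$ or a trivial sum; the Salié sum case (odd $d$) can be evaluated even more precisely. In every case the bound is $\lesssim q^{1/2}$, which when combined with the prefactor $\#S_j^{-1}q^{-1}q^{d/2} \approx q^{-(d-1)}q^{-1}q^{d/2} = q^{-(d/2+1)+1} = q^{-d/2}$... one checks the arithmetic gives exactly $q^{-(d-1)/2}$ as in \eqref{decaygoal}. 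I would organize this as a separate lemma (decay of $\widehat{d\sigma}$), prove it by the Gauss-sum-then-Kloosterman computation above with careful bookkeeping of the cases $d$ even/odd and $\mathrm{disc}(Q)$ square/non-square, and then feed it into the abstract Tomas--Stein interpolation to conclude. The low-dimensional cases $d=2$ should be checked directly since there the Kloosterman sum is one-dimensional and the exponent $\frac{2d+2}{d-1} = 6$ is large, making the estimate comparatively soft.
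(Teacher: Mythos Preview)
Your proposal is correct and follows essentially the same approach as the paper: both establish the decay estimate $|\widehat{d\sigma}(m)|\lesssim q^{-(d-1)/2}$ for $m\neq 0$ via the identical route (write $S_j(x)$ by orthogonality in $t$, diagonalize $Q$, complete the square to produce $d$ Gauss sums, and bound the remaining sum over $t\in{\Bbb F}_q^*$ as a Kloosterman sum for $d$ even or a Sali\'e sum for $d$ odd), and then feed this into an abstract Tomas--Stein mechanism. The only cosmetic difference is that the paper quotes Mockenhaupt--Tao's Lemma~6.1 (the iteration $R^*(p\to r/\theta)\lesssim 1 + R^*(p\to r)^\theta q^{-\tilde d(1-\theta)/4}$, initialized at $R^*(2\to 2)\approx q^{1/2}$) in place of the $TT^*$/convolution formulation you sketch; these are equivalent packagings of the same interpolation.
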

In the case when $d=2$, Mochenhaupt and Tao  (\cite{MT04}) showed that the
necessary conditions for the boundedness of $R^*(p\to r)$ in  (\ref{conjecture}) 
are also sufficient when $S$ is the parabola. Theorem  \ref{Main2} below implies that 
this also holds in the case when $S$ is a  non-degenerate quadratic  curve. 
To see this, observe from Corollary \ref{cor1} that $\#S \approx q$  for $d=2.$ 
Thus the necessary conditions in (\ref{conjecture}) take the  form
\begin{equation} \label{necessary2d} r\ge 4 \quad \mbox{and}\quad r\ge
\frac{2p}{p-1}.\end{equation}
Combining (\ref{fact}) with Theorem \ref{Main2} below, we see that
\begin{equation}\label{Trivial1} R^*(p\rightarrow 4)\lesssim 1 \quad
\mbox{for} \quad 2 \le p\le \infty.\end{equation}
By direct estimation, we have
 \begin{equation}\label{Trivial2}  R^*(p\to \infty) \lesssim 1 \quad \mbox{for}\quad 1\le  p\le \infty.\end{equation}
Interpolating (\ref{Trivial1}) and (\ref{Trivial2}), we see that the
necessary conditions given by (\ref{necessary2d}) are in fact sufficient
as we claim once we establish the following result.
\begin{theorem}\label{Main2} Let $d\ge2 $. Let $S_j$ be the non-degenerate
quadratic surface in  ${\Bbb F}_q^d$ defined as in (\ref{surface}).  Then we have
\[ R^*(2\rightarrow 4) \lesssim 1.\] \end{theorem}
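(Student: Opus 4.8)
The plan is to prove $R^*(2\to 4)\lesssim 1$ by the standard $TT^*$ / dualization, reducing the $L^2(S_j,d\sigma)\to L^4(\mathbb F_q^d,dm)$ bound to an $L^{4/3}\to L^2$ estimate for the adjoint, which by Plancherel is equivalent to showing
\[
\left\| (g\,d\sigma)^{\vee}\right\|_{L^4(\mathbb F_q^d,dx)}\lesssim \|g\|_{L^{4/3}(S_j,d\sigma)}.
\]
Writing out the fourth power and expanding, $\|(g d\sigma)^\vee\|_{L^4}^4$ becomes (up to the normalizations) a weighted count of quadruples $x,y,z,w\in S_j$ with $x+y=z+w$. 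So the heart of the matter is the additive-energy type estimate: one must bound, for $E\subset S_j$,
\[
\#\{(x,y,z,w)\in E^4 : x+y=z+w\}\lesssim \frac{(\#E)^2 (\#S_j)^2}{q^d} + (\#E)^2,
\]
or more precisely an $L^2$ bound on the convolution $(E d\sigma)*(E d\sigma)$; by interpolation with the trivial $L^\infty$ bound and positivity it suffices to get this for characteristic functions $E$ with the right dependence on $\#S_j\approx q^{d-1}$.

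The key step is therefore to estimate $\nu(\xi):=\#\{(x,y)\in S_j\times S_j : x+y=\xi\}$ and, more to the point, $\sum_\xi \nu_E(\xi)^2$ where $\nu_E$ counts pairs in $E$. I would do this via Fourier analysis: $\nu_E = (\#S_j)^2\,(E d\sigma * E d\sigma)$, and by Plancherel
\[
\sum_\xi \nu_E(\xi)^2 = (\#S_j)^4 q^d \sum_{m} |\widehat{E d\sigma}(m)|^4.
\]
Splitting off the $m=0$ term (which contributes the main term $\approx (\#E)^4/q^d \cdot (\text{stuff})$) reduces everything to a good uniform bound on $|\widehat{S_j d\sigma}(m)|$ for $m\neq 0$ — equivalently $|\widehat{S_j}(m)|$. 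This is exactly the Fourier-decay estimate the paper advertises, and it is computed by completing the square in the quadratic form and evaluating the resulting exponential sum, which reduces to Gauss sums: one expects $|\widehat{S_j}(m)|\lesssim q^{-(d+1)/2}$ for generic $m$ with a small exceptional set, i.e. $|\widehat{S_j d\sigma}(m)|\lesssim q^{-(d-1)/2}=(\#S_j)^{-1/2}q^{-(d-1)/2}\cdot\ldots$; the precise normalization gives the square-root cancellation needed to beat the trivial bound. Then $\sum_{m\neq 0}|\widehat{S_j d\sigma}(m)|^4 \le \|\widehat{S_j d\sigma}\|_{L^\infty(m\neq 0)}^2 \sum_m |\widehat{S_j d\sigma}(m)|^2$ and Plancherel on the last factor closes the estimate.

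The main obstacle is precisely the Fourier-decay bound for $\widehat{S_j}(m)$: unlike the paraboloid, where the Gauss sum computation is a one-liner, for a general non-degenerate quadratic form one has to complete the square and keep careful track of how the quadratic character of $Q$'s discriminant and of $j$ enter, and there is a genuine exceptional set of frequencies $m$ lying on the "dual" variety where $\widehat{S_j}(m)$ is larger (size $\approx q^{-d/2+1}$ or so on a set of size $\approx q$). One must check that this exceptional set is small enough that its contribution to $\sum_m |\widehat{S_j d\sigma}(m)|^4$ is still $O(1)$ after the normalization — this is where the exponent $4$ (as opposed to something smaller) is being used, and it is the step that genuinely requires the $d=2$-through-all-$d$ uniformity claimed in the statement. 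Once the decay estimate and the count of the exceptional frequencies are in hand, assembling the pieces is routine: the main term $m=0$ gives the $\|g\|_{L^{4/3}}$-type bound via Young/Hölder, and the $m\neq 0$ terms are controlled by square-root cancellation plus Plancherel.
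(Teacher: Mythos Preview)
Your proposal has a genuine gap, and it is precisely in the dimension where Theorem~\ref{Main2} has new content.

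First, the sketch is muddled at the key step: you set up the additive-energy reformulation correctly for a general $E\subset S_j$, but then the bound you actually carry out,
\[
\sum_{m\neq 0}|\widehat{S_jd\sigma}(m)|^4 \le \|\widehat{S_jd\sigma}\|_{L^\infty(m\neq 0)}^2 \sum_m |\widehat{S_jd\sigma}(m)|^2,
\]
is only for $E=S_j$. For a general $E$ there is no pointwise control on $|\widehat{Ed\sigma}(m)|$, so this inequality does not apply. The natural fix is to relax one copy of $E$ to $S_j$ (this is exactly the paper's Theorem~\ref{incidence}): one gets
\[
\Lambda(E):=\#\{(x,y,z,w)\in E^4:x+y=z+w\}\ \lesssim\ (\#E)^3 q^{-1}+(\#E)^2 q^{\frac{d-1}{2}}.
\]
But $R^*(2\to4)\lesssim 1$ requires $\Lambda(E)\lesssim (\#E)^2 q^{d-2}$, and the second term above forces $q^{(d-1)/2}\lesssim q^{d-2}$, i.e.\ $d\ge 3$. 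For $d\ge 3$ the conclusion already follows from the Tomas--Stein estimate (Theorem~\ref{Main3}); the only new case in Theorem~\ref{Main2} is $d=2$, and your Fourier-decay route fails there.

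Second, your description of the obstacle is off: for $j\neq 0$ there is \emph{no} exceptional set of frequencies. Lemma~\ref{keylemma1} (proved via Kloosterman/Sali\'e bounds, using crucially that $j\neq 0$) gives the uniform decay $|\widehat{S_j}(m)|\lesssim q^{-(d+1)/2}$ for every $m\neq 0$. The exceptional-set phenomenon you describe occurs for the cone $j=0$, not here.

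The paper's proof is different and more elementary: it uses Cauchy--Schwarz to reduce to the \emph{pointwise} bound $d\sigma*d\sigma(x)\lesssim 1$ for $x\neq 0$, equivalently $\#\{(\alpha,\beta)\in S_j^2:\alpha+\beta=x\}\lesssim q^{d-2}$, and then proves this by direct algebra---diagonalize $Q$, and observe that the system $Q(\alpha)=j$, $Q(x-\alpha)=j$ is the intersection of a quadric with an affine hyperplane, hence has $\le 2q^{d-2}$ solutions. This pointwise count is what rescues $d=2$ and is the missing idea in your approach.
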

Observe that Theorem \ref{Main2} is stronger in two dimensions. Theorem
\ref{Main3} and Theorem \ref{Main2} are the same in three dimensions, and
Theorem \ref{Main3} is stronger in dimension four and higher.
Using incidence theory, we are able to improve the exponents above in a
restricted setting. See Theorem \ref{Incidence1} and FIGURE \ref{Picture2} 
for extension theorems restricted to big sets, and also see Theorem \ref{Incidence3}, 
FIGURE \ref{Picture3}, \ref{Picture4}, and \ref{Picture5} for extension theorems restricted to small sets. 
These results are analogous to those obtained by
Mockenhaupt and Tao as described in (\ref{mtincidence}) above. While the
aforementioned authors use combinatorial methods to prove their incidence
theorems, we use Fourier analytic methods which eventually reduce proofs
to the estimates for Kloosterman and related sums.
\begin{theorem}\label{Incidence1}Let $S_j$ be a non-degenerate quadratic
surface in ${\Bbb F}_q^d$ and $E$ be a subset of $S_j.$
Then we have the following estimate
\begin{equation}\label{Bigsize}\|\widehat{E d\sigma}\|_{L^4({\Bbb
F}_q^d,dm)} \lesssim  \|E\|_{L^{\frac{4}{3}} (S_j, d\sigma)} \quad
\mbox{for} \quad  q^{\frac{d+1}{2}} \lesssim \#E \lesssim
q^{d-1}.\end{equation}
\end{theorem}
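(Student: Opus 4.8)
The plan is to expand the $L^4$ norm explicitly and reduce everything to an incidence-type count controlled by Kloosterman-sum estimates. First I would write, using the definition of the extension operator,
\[
\|\widehat{Ed\sigma}\|_{L^4({\Bbb F}_q^d,dm)}^4 = \sum_{m\in{\Bbb F}_q^d}\left|\frac{1}{\#S_j}\sum_{x\in E}\chi(-x\cdot m)\right|^4 = \frac{1}{(\#S_j)^4}\sum_{m}\left(\sum_{x,y\in E}\chi((y-x)\cdot m)\right)^2,
\]
and then carry out the sum in $m$ to obtain $q^d (\#S_j)^{-4}$ times the number of quadruples $(x,y,z,w)\in E^4$ with $x-y = z-w$, equivalently $x-y+w-z = 0$. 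Since $E\subset S_j$ and $\#S_j\approx q^{d-1}$ (by Corollary \ref{cor1}), the goal \eqref{Bigsize} is equivalent to the bound
\[
\#\{(x,y,z,w)\in E^4 : x - y = z - w\} \lesssim q^{-d}(\#S_j)^4 (\#E/\#S_j)^{3} \approx q^{d-2}(\#E)^{3}.
\]

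The next step is to recast the quadruple count as an incidence count. Writing $t = x - y$, each value of $t$ contributes $\nu(t)^2$ where $\nu(t) = \#\{(x,y)\in E\times E : x - y = t\}$, so I must bound $\sum_{t}\nu(t)^2$. The diagonal $t = 0$ contributes $(\#E)^2$, which is acceptable in the stated range since $\#E \lesssim q^{d-1}$ forces $(\#E)^2 \lesssim q^{d-2}\#E\cdot q \le q^{d-2}(\#E)^2/\#E \cdot \#E$... more carefully, $(\#E)^2\le q^{d-2}(\#E)^3/\#E \cdot \#E^{-1}$ — one checks $(\#E)^2 \lesssim q^{d-2}(\#E)^3$ iff $1\lesssim q^{d-2}\#E$, true since $\#E\gtrsim q^{(d+1)/2}$. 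For $t\ne 0$ I would use the key structural fact that the equation $x - y = t$ with $x,y\in S_j$ is a linear slice of the surface: fixing $t$, the pair $(x,y)$ with $x,y\in S_j$ and $x-y=t$ means $x\in S_j$ and $Q(x) - Q(x-t) = 0$, i.e. $x$ lies on a hyperplane (the gradient term $2x\cdot At - Q(t)=0$ is affine-linear in $x$), intersected with $S_j$. Hence $\nu_{S_j}(t):=\#\{(x,y)\in S_j\times S_j : x-y=t\}$ equals the number of points of $S_j$ on an explicit hyperplane, which by the standard Gauss-sum computation for quadratic surfaces (the same one used to prove Theorem \ref{Main3}) is $q^{d-2} + O(q^{(d-1)/2})$, uniformly in $t\ne 0$ with mild care about the degenerate cases where the hyperplane is tangent.

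With this in hand, the endgame is a two-step counting argument for $E$ rather than all of $S_j$: I would bound $\sum_{t\ne 0}\nu_E(t)^2 \le \big(\max_{t\ne 0}\nu_E(t)\big)\cdot\sum_{t\ne 0}\nu_E(t) \le \big(\max_{t\ne 0}\nu_{S_j}(t)\big)\cdot (\#E)^2 \lesssim q^{d-2}(\#E)^2$, which is even better than needed, OR — if that crude bound loses in some regime — use a Fourier/Kloosterman estimate for the deviation $\nu_E(t) - (\#E)^2/\#S_j$ summed against itself, exactly paralleling the extension estimates, to get $\sum_t \nu_E(t)^2 \lesssim (\#E)^4/\#S_j + q^{(d-1)/2}(\#E)^2$; the first term is $\approx q^{1-d}(\#E)^4 \lesssim q^{d-2}(\#E)^3$ precisely when $\#E\lesssim q^{2d-3}$ (implied by $\#E\lesssim q^{d-1}$), and the second is $\lesssim q^{d-2}(\#E)^3$ precisely when $q^{(d-1)/2}\lesssim q^{d-2}\#E$, i.e. $\#E\gtrsim q^{(3-d)/2}$, which for $d\ge 2$ is implied by $\#E\gtrsim q^{(d+1)/2}$. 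This is where the lower bound $\#E\gtrsim q^{(d+1)/2}$ in the hypothesis is used.

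The main obstacle I anticipate is the uniformity of the hyperplane-section count $\nu_{S_j}(t)$ as $t$ ranges over nonzero vectors: when the hyperplane $\{2x\cdot At = Q(t)\}$ is tangent to $S_j$ (which happens for $t$ with $Q(t)$ in a specific relation to the form), the error term can degrade from $q^{(d-1)/2}$ to $q^{d/2}$ or the main term can shift, and one must check that the number of such bad $t$ is small enough that their contribution, squared and summed, is still absorbed. Handling these degenerate directions carefully — presumably by the same Gauss/Kloosterman-sum bookkeeping that underlies Theorem \ref{Main3}, splitting according to whether $At$ is isotropic and the value of $Q(t)$ — is the technical heart of the argument; everything else is the routine orthogonality expansion above.
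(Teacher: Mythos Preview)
There is an arithmetic slip that hides a real gap. The correct target for the quadruple count is
\[
\#\{(x,y,z,w)\in E^4 : x+z=y+w\}\ \lesssim\ q^{-d}\,(\#S_j)\,(\#E)^3\ \approx\ q^{-1}(\#E)^3,
\]
not $q^{d-2}(\#E)^3$ (you dropped a factor coming from $q^{-d}\cdot \#S_j \approx q^{-1}$). With the right target, your ``crude'' bound
\[
\sum_{t\ne 0}\nu_E(t)^2\ \le\ \Big(\max_{t\ne 0}\nu_{S_j}(t)\Big)\cdot (\#E)^2\ \lesssim\ q^{d-2}(\#E)^2
\]
only meets $q^{-1}(\#E)^3$ when $\#E\gtrsim q^{d-1}$, i.e.\ only at the very top of the allowed range. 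Your alternative Fourier bound $(\#E)^4/\#S_j + q^{(d-1)/2}(\#E)^2$ is not the right output either: it is unclear what ``the deviation $\nu_E(t)-(\#E)^2/\#S_j$'' means (differences $t$ need not lie in $S_j$), and in any case the main term $(\#E)^4 q^{1-d}$ exceeds $q^{-1}(\#E)^3$ once $\#E>q^{d-2}$, so the bound fails near $\#E\approx q^{d-1}$.

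The missing idea is to exploit that the \emph{fourth} point of the quadruple lies in $S_j$, not merely that the difference set is controlled. Since $w=x-y+z\in E\subset S_j$, one has
\[
\#\{(x,y,z,w)\in E^4: x+z=y+w\}\ \le\ \sum_{z\in E}\ \#\{(x,y)\in E^2: x-y+z\in S_j\},
\]
and for each fixed $z$ one Fourier-expands $S_j$ (not $E$): writing $S_j(x-y+z)=\sum_m\chi(m\cdot(x-y+z))\widehat{S_j}(m)$ and using $|\widehat{S_j}(m)|\lesssim q^{-(d+1)/2}$ for $m\ne 0$ together with Plancherel gives the uniform-in-$z$ bound $(\#E)^2 q^{-1}+(\#E)\,q^{(d-1)/2}$. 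Summing over $z\in E$ yields $(\#E)^3 q^{-1}+(\#E)^2 q^{(d-1)/2}$, and the second term is $\lesssim q^{-1}(\#E)^3$ precisely when $\#E\gtrsim q^{(d+1)/2}$, which is exactly the hypothesis. This route also makes your worry about tangent hyperplane sections irrelevant: the decay of $\widehat{S_j}$ already encodes all of that uniformly.
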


\begin{figure}[h]
\centering\leavevmode\epsfysize=4.5cm \epsfbox{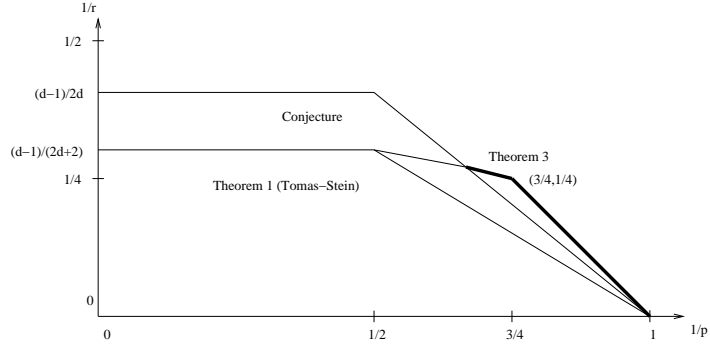}
\caption{\label{Picture2} Tomas-Stein exponent and extension estimates in a restricted setting to big sets
$( q^{\frac{d+1}{2}}\lesssim \#E \lesssim q^{d-1})$}
\end{figure}

\begin{theorem}\label{Incidence3}
Let $S_j$ be a non-degenerate quadratic surface in ${\Bbb F}_q^d$ and $E$
be a subset of $S_j.$  Then for every $p_0 \geq 2,$ we have the following  estimates
\begin{equation} \|\widehat{E d\sigma}\|_{L^r({\Bbb F}_q^d,dm)} \lesssim
\|E\|_{L^{p}(S_j, d\sigma)} \quad \mbox{for} \quad
1 \lesssim \#E \lesssim q^{\frac{d-1}{2}} \end{equation}
where the exponents $p$ and $r$ are given by
\[ p\ge \frac{ (6d-2)p_0 -8d+8}{(3d-5)p_0-4d+12} \qquad \mbox{and} \qquad
r \ge \frac{ (6d-2)p_0 -8d+8}{(3d-3)p_0-4d+4},\]
and
\begin{equation} \|\widehat{E d\sigma}\|_{L^r({\Bbb F}_q^d,dm)} \lesssim
\|E\|_{L^{p}(S_j, d\sigma)} \quad \mbox{for} \quad
1 \lesssim \#E \lesssim q^{\frac{d+1}{2}} \end{equation}
where the exponents $p$ and $r$ are given by
\[ p\ge \frac{ (6d-10)p_0 -8d+24}{(3d-9)p_0-4d+20} \qquad \mbox{and}
\qquad r \ge \frac{ (6d-10)p_0 -8d+24}{(3d-7)p_0-4d+12}.\] \end{theorem}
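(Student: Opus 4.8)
The plan is to obtain both families of estimates by interpolating the Tomas--Stein bound of Theorem \ref{Main3} against a single $L^4$ incidence estimate valid for small sets, with the parameter $p_0\ge 2$ recording the interpolation (so that $p_0=2$ simply returns Theorem \ref{Main3}).

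First I would reduce to an $L^4$ estimate for $\widehat{Ed\sigma}$. Expanding the fourth power and using $\sum_{m\in{\Bbb F}_q^d}\chi(\xi\cdot m)=q^d$ for $\xi=0$ and $0$ otherwise gives
\[
\|\widehat{Ed\sigma}\|_{L^4({\Bbb F}_q^d,dm)}^4=\frac{q^d}{(\#S_j)^4}\,\#\bigl\{(x,y,z,w)\in E^4:\ x+z=y+w\bigr\},
\]
so the problem becomes estimating the additive energy $\Lambda(E)=\sum_{t}f_E(t)^2$, where $f_E(t)=\#\{(a,b)\in E^2:a+b=t\}$. The term $t=0$ contributes at most $(\#E)^2$, and for $t\neq0$ one uses $f_E(t)\le f_{S_j}(t)$ together with the identity $f_{S_j}(t)=(S_j\ast S_j)(t)=q^{d}\sum_{m}\chi(t\cdot m)\,\widehat{S_j}(m)^2$. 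Substituting the decay bound $|\widehat{S_j}(m)|\lesssim q^{-(d+1)/2}$ for $m\neq0$ — equivalently $|\widehat{S_jd\sigma}(m)|\lesssim q^{-(d-1)/2}$, the Gauss--Kloosterman input that already underlies Theorem \ref{Main3} — isolates a main term of size $q^{d-2}(\#E)^2$ and an error that is a generalized Kloosterman sum twisted by the exponential sum $\widehat E$. Bounding this, and combining with the elementary bound $\Lambda(E)\le(\#E)^3$, yields an estimate for $\Lambda(E)$, and hence for $\|\widehat{Ed\sigma}\|_{L^4(dm)}$, as a sum of a few monomials in $\#E$ and $q$. The two size restrictions $\#E\lesssim q^{(d-1)/2}$ and $\#E\lesssim q^{(d+1)/2}$ are precisely the ranges on which one of these monomials dominates in the form needed to beat $\|E\|_{L^{p}(S_j,d\sigma)}=(\#E/\#S_j)^{1/p}$ after interpolation; in the low-dimensional cases this forces one to use the square-root cancellation in the twisted Kloosterman sum, while for larger $d$ the elementary bound already suffices on these ranges.

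With the $L^4$ bound in hand and the Tomas--Stein estimate $\|\widehat{Ed\sigma}\|_{L^{(2d+2)/(d-1)}(dm)}\lesssim\|E\|_{L^2(S_j,d\sigma)}$ of Theorem \ref{Main3}, I would interpolate to get $\|\widehat{Ed\sigma}\|_{L^r(dm)}\lesssim\|E\|_{L^p(S_j,d\sigma)}$ for $r$ between $(2d+2)/(d-1)$ and $4$, and then optimize this inequality over the admissible range of $\#E$, using $\#S_j\approx q^{d-1}$ (Corollary \ref{cor1}). Since the $L^4$ bound is a sum of monomials in $\#E$, the extremal value of $\#E$ in this optimization moves as $(p,r)$ vary, so sweeping it out produces a one-parameter curve of admissible exponent pairs rather than a single line segment; parametrizing the curve by $p_0$ gives exactly the two systems of inequalities in the statement, and one checks by direct substitution that $p_0=2$ reproduces $(p,r)=(2,(2d+2)/(d-1))$, consistent with the fact that no improvement over Tomas--Stein is possible for the largest admissible sets.

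The hard part is the incidence/$L^4$ step. The naive estimate $f_E(t)\le f_{S_j}(t)\le Cq^{d-2}$ only recovers $R^*(2\to4)\lesssim1$, which is already Theorem \ref{Main2}, so the gain has to come from the cancellation in $\sum_{t\neq0}f_E(t)\bigl(f_{S_j}(t)-q^{d-2}\bigr)$, a sum of Kloosterman-type sums weighted by $\widehat E$; controlling this twisted sum, with a loss in $\#E$ that is small precisely on the two stated ranges, is the crux. Everything else — the orthogonality expansion, the elementary energy bound, and the interpolation and optimization bookkeeping that produce the displayed exponent formulas — is routine, if somewhat lengthy.
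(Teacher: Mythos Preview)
Your $L^4$/additive-energy step is on the right track but you have both overcomplicated it and paired it with the wrong interpolation endpoint, and the latter is fatal.

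First, the incidence step is much simpler than you suggest. Expanding as you do gives
\[
\|\widehat{Ed\sigma}\|_{L^4}^4=\frac{q^d}{(\#S_j)^4}\,\Lambda(E),\qquad
\Lambda(E)=\#\{(x,y,z,s)\in E^4:\ x+z=y+s\}.
\]
The paper bounds $\Lambda(E)$ by the minimum of the trivial estimate $(\#E)^3$ and the bound $(\#E)^3q^{-1}+(\#E)^2q^{(d-1)/2}$, the second of which follows in one line from $|\widehat{S_j}(m)|\lesssim q^{-(d+1)/2}$ and Plancherel (this is Theorem~\ref{incidence}/Corollary~\ref{CorIncidence}). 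On the range $\#E\lesssim q^{(d-1)/2}$ the \emph{trivial} bound $(\#E)^3$ is the one used; on $q^{(d-1)/2}\lesssim\#E\lesssim q^{(d+1)/2}$ it is $(\#E)^2q^{(d-1)/2}$. There is no twisted Kloosterman sum weighted by $\widehat E$ to control; the only exponential-sum input is the decay of $\widehat{S_j}$ already encoded in Lemma~\ref{keylemma1}. So the ``hard part'' you describe is not actually present.

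Second, and more seriously, your interpolation endpoint is wrong. You propose to interpolate the $L^4$ bound against Tomas--Stein, i.e.\ against the lossless estimate at $r_0=\frac{2d+2}{d-1}$. That can only produce exponents $r$ in the interval between $r_0$ and $4$. But a direct computation with the stated formulas shows that as $p_0$ ranges over $[2,\infty)$ the critical $r$ in Theorem~\ref{Incidence3} \emph{decreases} from $r_0$ down to $\frac{6d-2}{3d-3}$ (respectively $\frac{6d-10}{3d-7}$); in particular all the nontrivial exponents in the theorem satisfy $r\le r_0$, outside the range your interpolation can reach. The paper instead interpolates the $L^{p_0}\to L^4$ estimates (with the size-dependent loss recorded in Lemma~\ref{Incidence2}) against the \emph{trivial} Plancherel identity
\[
\|\widehat{Ed\sigma}\|_{L^2({\Bbb F}_q^d,dm)}\approx q^{1/2}\,\|E\|_{L^2(S_j,d\sigma)},
\]
which carries a $q^{1/2}$ \emph{loss} but sits at $r=2$. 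Balancing a loss at $r=2$ against a loss at $r=4$ is what lets the zero-loss point land at some $r<r_0$; interpolating against a lossless estimate at $r_0$ cannot do this. Your observation that $p_0=2$ reproduces $(p,r)=(2,r_0)$ is correct, but it is the \emph{only} point your scheme recovers.
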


\begin{figure}[h]
\centering\leavevmode\epsfysize=4.5cm \epsfbox{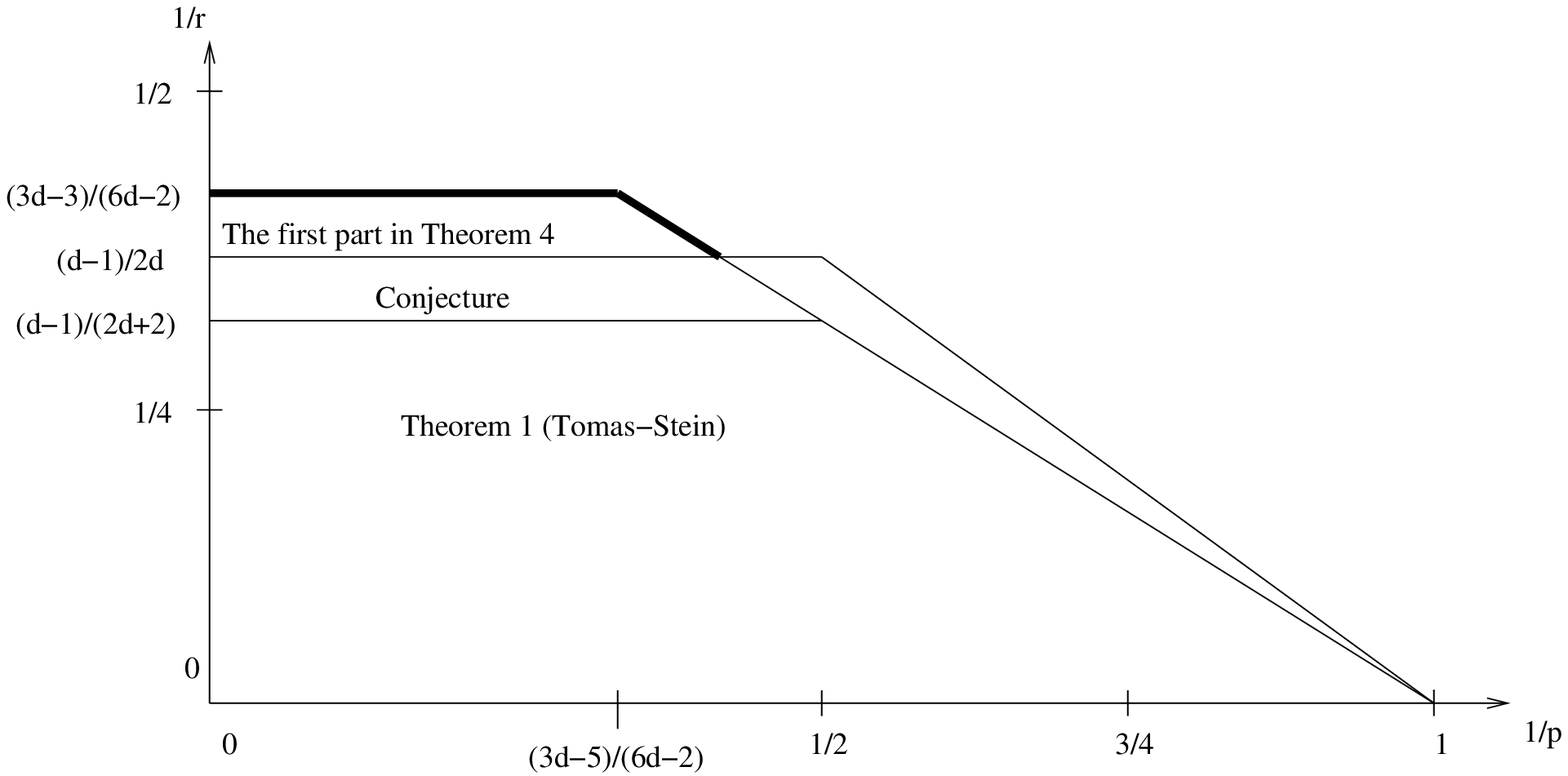}
\caption{\label{Picture3} Extension estimates in a restricted setting 
\newline
$( 1\lesssim \#E \lesssim q^{\frac{d-1}{2}})$}
\end{figure}

\begin{figure}[h]
\centering\leavevmode\epsfysize=4.5cm \epsfbox{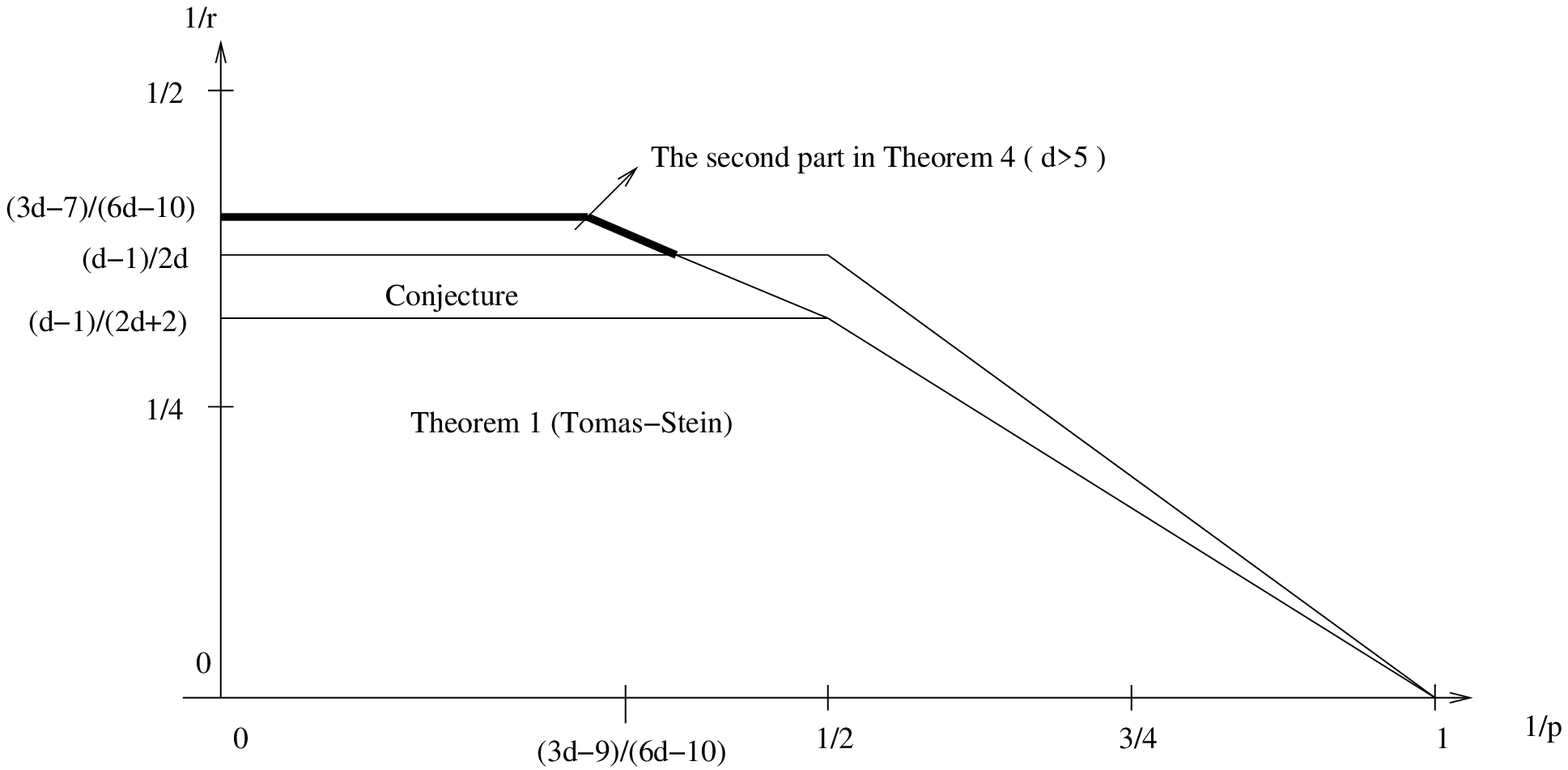}
\caption{\label{Picture4} Extension estimates in a restricted setting 
\newline
$( 1\lesssim \#E \lesssim q^{\frac{d+1}{2}} , d >5)$}
\end{figure}

\begin{figure}[h]
\centering\leavevmode\epsfysize=4.5cm \epsfbox{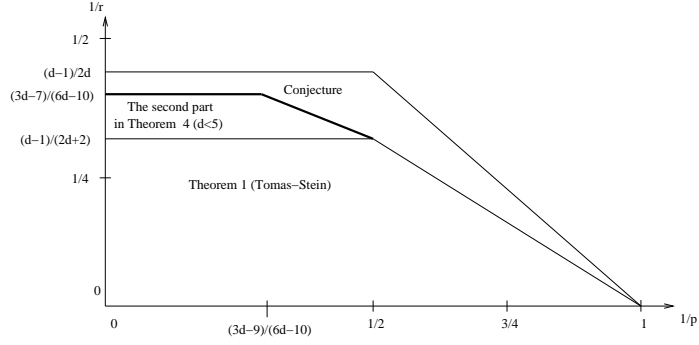}
\caption{\label{Picture5} Extension estimates in a restricted setting 
\newline
$( 1\lesssim \#E \lesssim q^{\frac{d+1}{2}}, d<5)$}
\end{figure}
\subsection{Outline of this paper} In section 2, we shall introduce few
theorems related to bounds on exponential sums. As an application,
we get the decay of the Fourier transform of the characteristic functions on
the non-degenerate quadratic surfaces in vector spaces over finite fields
(see Lemma \ref{keylemma1} below). In section 3, we shall prove Theorem \ref{Main3} 
which can be obtained from the results of Lemma \ref{keylemma1}. In section 4,
the proof of Theorem \ref{Main2} will be given. In the final section, we prove 
Theorem \ref{Incidence1} and  Theorem \ref{Incidence3}. 
\section{Classical bounds on exponential sums and consequences}
In this section, we shall estimate the decay of Fourier transform of 
the characteristic functions on non-degenerate quadratic surfaces in $\Bbb{F}_q^d$
using the classical bounds on exponential sums. 
To do this, we first introduce the well known theorems for exponential sums.
The following theorem is a well known estimate for Gauss sums.
\begin{theorem}\label{Gausssum}Let $\chi$ be a non-trivial additive
character of ${\Bbb F}_q$,
and $\psi$ a multiplicative character of ${\Bbb F}_q^*.$ It follows that
\[G_a(\chi,\psi)=\sum_{t\in {\Bbb  F}_q^*}\chi(at)\psi(t)=O(q^{\frac{1}{2}}), 
\ a\in {\Bbb F}_q^*.\] \end{theorem}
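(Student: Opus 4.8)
The plan is to prove the sharp form $|G_a(\chi,\psi)| \le q^{1/2}$ by the classical orthogonality argument. First I would dispose of the case in which $\psi$ is the principal (trivial) multiplicative character $\psi_0$: then $G_a(\chi,\psi_0) = \sum_{t\in {\Bbb F}_q^*}\chi(at) = -1$, since $\sum_{t\in {\Bbb F}_q}\chi(at)=0$ for $a\neq 0$, and this is certainly $O(q^{1/2})$. So from now on assume $\psi$ is non-trivial.

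Next I would reduce to the case $a=1$. Substituting $t\mapsto a^{-1}t$ and using the multiplicativity of $\psi$ gives
\[ G_a(\chi,\psi)=\sum_{t\in{\Bbb F}_q^*}\chi(at)\psi(t)=\psi(a)^{-1}\sum_{s\in{\Bbb F}_q^*}\chi(s)\psi(s)=\psi(a)^{-1}G_1(\chi,\psi); \]
since $|\psi(a)^{-1}|=1$ we obtain $|G_a(\chi,\psi)|=|G_1(\chi,\psi)|$, so it suffices to bound $G_1:=G_1(\chi,\psi)$.

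The key step is to compute $|G_1|^2$ exactly. Writing
\[ |G_1|^2=\sum_{s,t\in{\Bbb F}_q^*}\chi(s-t)\,\psi(s)\overline{\psi(t)} \]
and performing the change of variables $s=tu$ (legitimate because $t\neq 0$), the double sum factors as $\sum_{u\in{\Bbb F}_q^*}\psi(u)\sum_{t\in{\Bbb F}_q^*}\chi(t(u-1))$. The inner sum equals $q-1$ when $u=1$ and equals $-1$ when $u\neq 1$ (again by the vanishing of the full additive sum over ${\Bbb F}_q$). Hence
\[ |G_1|^2=(q-1)-\sum_{\substack{u\in{\Bbb F}_q^*\\ u\neq 1}}\psi(u)=(q-1)-\Bigl(\sum_{u\in{\Bbb F}_q^*}\psi(u)-\psi(1)\Bigr)=(q-1)+1=q, \]
using $\sum_{u\in{\Bbb F}_q^*}\psi(u)=0$ for non-trivial $\psi$. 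Therefore $|G_1|=q^{1/2}$, and combining with the reduction above, $|G_a(\chi,\psi)|=q^{1/2}$ for every $a\in{\Bbb F}_q^*$, which is even a touch stronger than the stated $O(q^{1/2})$.

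There is no serious obstacle here; the argument rests only on the two orthogonality facts $\sum_{t\in{\Bbb F}_q}\chi(ct)$ is $q$ if $c=0$ and $0$ otherwise, and $\sum_{u\in{\Bbb F}_q^*}\psi(u)$ is $q-1$ if $\psi=\psi_0$ and $0$ otherwise. The only points requiring a little care are remembering to treat the principal multiplicative character separately (there the square-root cancellation fails and one only gets $O(1)$), and making the substitution $s=tu$ cleanly so that the double sum genuinely factors.
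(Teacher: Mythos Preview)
Your proof is correct and follows essentially the same classical orthogonality argument as the paper: dispose of the principal character, then compute $|G_a|^2$ via a multiplicative substitution and the two standard orthogonality relations to obtain exactly $q$. The only cosmetic difference is that you first reduce to $a=1$ via $t\mapsto a^{-1}t$, whereas the paper leaves $a$ in place and substitutes $t\mapsto ts$ directly in the double sum; the computations are otherwise line-for-line the same.
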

\begin{proof}If $\psi = 1$, the result is obvious, so we may assume that
$\psi$ is a non-trivial multiplicative character of ${\Bbb F}_q^*$. We
Have  \begin{align*}|G_a(\chi,\psi)|^2 &=\sum_{t\in {\Bbb F}_q^*}\sum_{s\in
{\Bbb F}_q^*}\chi(at-as)\psi(ts^{-1})\\
&=\sum_{t\in {\Bbb F}_q^*}\psi(t)\sum_{s\in {\Bbb F}_q^*}\chi(ast-as)\\
&=\sum_{t\in {\Bbb F}_q^*}\psi(t)\Big(-1+\sum_{s\in {\Bbb
F}_q}\chi(st-s)\Big)\\
&=-\sum_{t\in {\Bbb F}_q^*}\psi(t) +\sum_{t\in {\Bbb F}_q^*}\sum_{s\in
{\Bbb F}_q}\chi((t-1)s)\\
&=0+q =q.\end{align*}
Thus
 \[|G_a(\chi,\psi)|= q^{\frac{1}{2}}\] and the proof is complete.  \end{proof}
The following theorem gives us the relation between more general
exponential sums and Gauss sums in Theorem \ref{Gausssum}. For a nice
proof, see \cite{LN97}.
\begin{theorem}\label{sumfomula} Let $\chi$ be a non-trivial additive
character of ${\Bbb F}_q, n\in {\mathbb N},$ and $\psi$ a multiplicative
character of ${\Bbb F}_q^*$ of order $h = gcd (n, q-1).$ Then
\[\sum_{s\in {\Bbb F}_q}
\chi(ts^n)=\sum_{k=1}^{h-1}\psi^{-k}(t)G(\psi^k,\chi)\] 
for any $t \in  {\Bbb F}_q^*,$ where $G(\psi^k,\chi)=\sum\limits_{s\in {\Bbb F}_q^*}
\psi^k(s)\chi(s).$  \end{theorem}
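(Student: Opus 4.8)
The plan is to reduce the exponential sum $\sum_{s\in{\Bbb F}_q}\chi(ts^n)$ to a combination of Gauss sums by grouping the terms according to the value of $s^n$. First I would set, for each $u\in{\Bbb F}_q$, $N(u)=\#\{s\in{\Bbb F}_q:s^n=u\}$, so that
\[\sum_{s\in{\Bbb F}_q}\chi(ts^n)=\sum_{u\in{\Bbb F}_q}N(u)\,\chi(tu).\]
Since $0$ is the only $n$-th root of $0$ we have $N(0)=1$, and for $u\in{\Bbb F}_q^*$ the crucial input is the classical counting identity $N(u)=\sum_{k=0}^{h-1}\psi^k(u)$, where $h=\gcd(n,q-1)$ and $\psi$ is a multiplicative character of exact order $h$.

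To establish that identity I would use that ${\Bbb F}_q^*$ is cyclic of order $q-1$: writing its elements as powers of a fixed generator, the image of the $n$-th power map on ${\Bbb F}_q^*$ is exactly the subgroup $({\Bbb F}_q^*)^h$ of $h$-th powers, which has index $h$, and the map is $h$-to-one onto this image; hence $N(u)=h$ for $u\in({\Bbb F}_q^*)^h$ and $N(u)=0$ otherwise. On the other hand, a multiplicative character of ${\Bbb F}_q^*$ is trivial on $({\Bbb F}_q^*)^h$ precisely when its order divides $h$, and these are exactly the $h$ characters $\psi^0,\psi^1,\dots,\psi^{h-1}$; by orthogonality of characters on the quotient group ${\Bbb F}_q^*/({\Bbb F}_q^*)^h$, the sum $\sum_{k=0}^{h-1}\psi^k(u)$ equals $h$ when $u\in({\Bbb F}_q^*)^h$ and $0$ otherwise. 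The two expressions for $N(u)$ therefore coincide.

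With the counting identity in hand, I would substitute it back into the first display and peel off the $k=0$ term. Since $t\neq 0$, the map $u\mapsto tu$ permutes ${\Bbb F}_q^*$, so $\sum_{u\in{\Bbb F}_q^*}\chi(tu)=\sum_{v\in{\Bbb F}_q^*}\chi(v)=-1$ by the nontriviality of $\chi$ (i.e. $\sum_{v\in{\Bbb F}_q}\chi(v)=0$), and this cancels exactly the contribution $N(0)\chi(0)=1$. What remains is
\[\sum_{s\in{\Bbb F}_q}\chi(ts^n)=\sum_{k=1}^{h-1}\ \sum_{u\in{\Bbb F}_q^*}\psi^k(u)\,\chi(tu).\]
In each inner sum I would change variables $v=tu$; since character values are roots of unity, $\psi^k(u)=\psi^k(t^{-1})\psi^k(v)=\psi^{-k}(t)\psi^k(v)$, so the inner sum equals $\psi^{-k}(t)\sum_{v\in{\Bbb F}_q^*}\psi^k(v)\chi(v)=\psi^{-k}(t)\,G(\psi^k,\chi)$. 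Summing over $k=1,\dots,h-1$ gives the asserted formula.

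The main obstacle is the middle step, the counting identity $N(u)=\sum_{k=0}^{h-1}\psi^k(u)$ for $u\neq 0$: this is where the cyclic structure of ${\Bbb F}_q^*$ and the orthogonality relations for multiplicative characters genuinely enter. Once it is available, the rest is a change of variables together with the vanishing of $\sum_{v\in{\Bbb F}_q}\chi(v)$. (The degenerate case $h=1$, in which both sides vanish because $s\mapsto s^n$ is then a permutation of ${\Bbb F}_q$, is handled automatically by the same computation.)
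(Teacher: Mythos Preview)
Your proof is correct and complete; it is precisely the standard argument (via the counting identity $N(u)=\sum_{k=0}^{h-1}\psi^k(u)$ for $u\in{\Bbb F}_q^*$ and the change of variables $v=tu$) found in Lidl--Niederreiter. The paper does not supply its own proof of this theorem but simply refers the reader to \cite{LN97}, so there is nothing further to compare.
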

The following theorem is well known as the estimation of the Sali\'e' sum,
often referred to as the twisted Kloosterman sum. See \cite{Sa32}.
\begin{theorem}\label{salie} Let $\psi$ be a multiplicative character of
order two of ${\Bbb F}_q^*$, $q$ odd, and $a,b\in {\Bbb F}_q$. Then for
any additive character $\chi$ of ${\Bbb F}_q,$
\[ \left|\sum_{t\in {\Bbb F}_q^*}\psi(t)\chi(at+bt^{-1}) \right| \lesssim
q^{\frac{1}{2}}. \] \end{theorem}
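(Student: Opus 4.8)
The plan is to prove the bound on the Sali\'e sum $\sum_{t \in {\Bbb F}_q^*} \psi(t)\chi(at+bt^{-1})$ by exploiting the classical fact that, unlike ordinary Kloosterman sums, Sali\'e sums can be evaluated essentially explicitly. First I would reduce to the generic case $ab \neq 0$: if $a = 0$ the sum is $\sum_t \psi(t)\chi(bt^{-1})$, a Gauss sum up to the substitution $t \mapsto t^{-1}$, hence $O(q^{1/2})$ by Theorem \ref{Gausssum} (and likewise if $b=0$); and after a scaling $t \mapsto \lambda t$ one may normalize so that $ab$ equals a fixed convenient value. The substance is then the identity expressing the Sali\'e sum as a single Gauss-type twist of $\chi$ evaluated at a point related to a square root of $4ab$.

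The key computational step I would carry out is to complete the square inside the exponential after inserting the quadratic-character weight. Writing $\psi$ for the quadratic character and using that $\sum_{s}\chi(s^2 \alpha) = \psi(\alpha) G(\psi,\chi)$ (the $n=2$ case of Theorem \ref{sumfomula}), one writes $\psi(t) = \psi(t^{-1})$ for the order-two character and opens up $\psi(t)$ as a Gauss sum: $\psi(t) G(\psi,\chi) = \sum_{s \in {\Bbb F}_q} \chi(s^2 t)$. Substituting this into the Sali\'e sum and interchanging the order of summation gives
\[ G(\psi,\chi) \sum_{t \in {\Bbb F}_q^*} \psi(t)\chi(at+bt^{-1}) = \sum_{s \in {\Bbb F}_q} \sum_{t \in {\Bbb F}_q^*} \chi\big((a+s^2)t + bt^{-1}\big). \]
For each fixed $s$ the inner sum is an ordinary Kloosterman sum $K(a+s^2, b)$; but the point of this maneuver is \emph{not} to invoke Weil's bound on each term — rather, one recognizes the right-hand side, after the change of variables that diagonalizes $t + t^{-1}$, as a sum that telescopes: the substitution $t \mapsto (b/(a+s^2))^{1/2} t$ (valid when $a + s^2$ is a nonzero square) turns $K(a+s^2,b)$ into $K(1, \cdot)$ with argument $2\sqrt{b(a+s^2)}$, and the remaining sum over $s$ of $\chi$ at these points collapses to a short sum of Gauss sums. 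Concretely the standard outcome is
\[ \sum_{t \in {\Bbb F}_q^*}\psi(t)\chi(at+bt^{-1}) = \psi(b)\, G(\psi,\chi) \sum_{w^2 = 4ab} \chi(w), \]
where the sum is over the (at most two) square roots $w$ of $4ab$ in ${\Bbb F}_q$. Since $|G(\psi,\chi)| = q^{1/2}$ by Theorem \ref{Gausssum} and the remaining factor is bounded by $2$, this yields $|\sum_t \psi(t)\chi(at+bt^{-1})| \le 2 q^{1/2}$, which is the claim with an absolute constant.

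The main obstacle is making the change-of-variables/interchange argument clean across the cases where $a+s^2$ is zero, a nonzero square, or a non-square: when $a+s^2$ is a non-square the corresponding scaling is not available over ${\Bbb F}_q$, and one must check that the contribution of those $s$ either vanishes by a character-cancellation argument or is absorbed into the explicit formula — this is exactly the bookkeeping that Sali\'e's original paper \cite{Sa32} handles, and it is where one uses $q$ odd and $\psi$ of order exactly two in an essential way. An alternative, if one prefers to avoid the exact evaluation, is to bound $\sum_s |K(a+s^2,b)|$ trivially via Weil's bound $|K(c,b)| \le 2q^{1/2}$ for $c \neq 0$, giving $|G(\psi,\chi)| \cdot |\text{Sali\'e sum}| \lesssim q \cdot q^{1/2}$ and hence $|\text{Sali\'e sum}| \lesssim q^{1/2}$ after dividing by $|G(\psi,\chi)| = q^{1/2}$ — but this shortcut secretly imports Weil's deep estimate, whereas the completing-the-square route keeps everything at the level of Gauss sums and is the honest self-contained proof. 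Either way, since the paper only needs the size bound $\lesssim q^{1/2}$ and not the exact formula, I would present the explicit-evaluation argument for its transparency and simply cite \cite{Sa32} for the routine case analysis.
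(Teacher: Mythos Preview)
The paper does not actually prove this theorem: it merely states the bound and refers the reader to \cite{Sa32}. Your proposal therefore supplies a proof where the paper supplies none, and your main line of argument --- opening up the quadratic character via the Gauss-sum identity $\psi(t)G(\psi,\chi)=\sum_{s}\chi(ts^2)$, interchanging sums, and then recognizing the explicit evaluation
\[
\sum_{t\in{\Bbb F}_q^*}\psi(t)\chi(at+bt^{-1})=\psi(b)\,G(\psi,\chi)\sum_{w^2=4ab}\chi(w)
\]
(up to normalization) --- is essentially Sali\'e's own computation and is correct. This gives the bound $2q^{1/2}$ with nothing deeper than Gauss sums, which is exactly what one wants here.

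One genuine slip: your ``alternative'' route via Weil's bound does not work as written. Bounding each of the $q$ Kloosterman sums $K(a+s^2,b)$ by $2q^{1/2}$ gives $|G(\psi,\chi)|\cdot|\text{Sali\'e}|\lesssim q\cdot q^{1/2}=q^{3/2}$, and dividing by $|G(\psi,\chi)|=q^{1/2}$ yields only $|\text{Sali\'e}|\lesssim q$, the trivial bound --- not $q^{1/2}$. The triangle inequality throws away all the cancellation among the $K(a+s^2,b)$ as $s$ varies, and that cancellation is precisely what the explicit evaluation captures. So this shortcut should simply be deleted; the explicit-formula argument is the real proof, and you are right to present that one.
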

The following is a classical estimate for Kloosterman sums due to Wey
(\cite{We48}). See also \cite{LN97}.
\begin{theorem}\label{Kloosterman} If $\chi$ is a non-trivial additive
character of  ${\Bbb F}_q$ and $a,b \in {\Bbb F}_q$ are not both zero, then
we have
\[ \left| \sum_{t\in {\Bbb F}_q^*}\chi(at+bt^{-1}) \right| \lesssim
q^{\frac{1}{2}}.\] \end{theorem}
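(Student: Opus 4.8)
The strategy is to recognize this Kloosterman sum as (up to sign) the trace of Frobenius on the $\ell$-adic cohomology of a rank-one sheaf on the multiplicative group, and to conclude via Weil's Riemann Hypothesis for curves over finite fields; an equivalent route relates the sum to a point-count on an Artin--Schreier curve of bounded genus. First I would dispose of the degenerate cases. If $b=0$ and $a\neq0$, then $\sum_{t\in{\Bbb F}_q^*}\chi(at)=\big(\sum_{t\in{\Bbb F}_q}\chi(at)\big)-1=-1$, and symmetrically when $a=0\neq b$, so in those cases the bound is immediate. Hence I may assume $ab\neq0$; a substitution $t\mapsto\lambda t$ then shows the value of the sum varies only mildly with $(a,b)$, although this reduction is not actually needed below.

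Next I would realize the sum geometrically. Let $\mathcal L_\chi$ be the Artin--Schreier sheaf attached to $\chi$, a lisse rank-one sheaf on ${\Bbb A}^1_{{\Bbb F}_q}$ whose Frobenius trace at $u$ is $\chi(u)$, wildly ramified at $\infty$ with Swan conductor $1$, and let $\mathcal L$ be its pullback along the morphism $t\mapsto at+bt^{-1}$ of ${\Bbb G}_m={\Bbb A}^1\setminus\{0\}$. Then $\mathcal L$ is lisse of rank one on ${\Bbb G}_m$ and wildly ramified with Swan conductor $1$ at each of $t=0$ and $t=\infty$, and the Grothendieck--Lefschetz trace formula gives
\[ \sum_{t\in{\Bbb F}_q^*}\chi(at+bt^{-1})=-\,\mathrm{Tr}\big(\mathrm{Frob}_q\mid H^1_c({\Bbb G}_{m,\overline{{\Bbb F}}_q},\,\mathcal L)\big), \]
because $H^0_c=0$ (as ${\Bbb G}_m$ is affine) and $H^2_c=0$ (as $\mathcal L$ has non-trivial geometric monodromy). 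The Euler--Poincar\'e (Grothendieck--Ogg--Shafarevich) formula yields $\dim H^1_c=-\chi_c=-\big(\chi_c({\Bbb G}_m)-\mathrm{Swan}_0-\mathrm{Swan}_\infty\big)=-(0-1-1)=2$, a bound independent of $q$. Finally I would invoke Weil's theorem — the Riemann Hypothesis for curves over finite fields, applied to the smooth projective model of the Artin--Schreier curve $y^p-y=\mathrm{Tr}_{{\Bbb F}_q/{\Bbb F}_p}\!\big(\gamma(at+bt^{-1})\big)$, where $p=char({\Bbb F}_q)$ and $\gamma\in{\Bbb F}_q$ is chosen so that $\chi(x)=e^{2\pi i\,\mathrm{Tr}_{{\Bbb F}_q/{\Bbb F}_p}(\gamma x)/p}$, and whose genus is bounded uniformly in $q$ — to conclude that $H^1_c$ is pure of weight $1$, i.e. every eigenvalue of $\mathrm{Frob}_q$ on it has absolute value $q^{1/2}$. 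Thus the sum is at most $2q^{1/2}$ in absolute value, which is the assertion with an absolute implied constant.

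The single deep ingredient is Weil's Riemann Hypothesis for curves over finite fields (\cite{We48}); everything else — identifying the relevant cohomology group, computing its dimension, and bounding the genus of the Artin--Schreier cover — is bookkeeping. The main obstacle, for a reader who wants a self-contained argument, is to replace Weil's theorem by Stepanov's elementary polynomial method: one constructs an auxiliary polynomial vanishing to prescribed high order along the solution set of a related equation and bounds its degree, extracting the square-root cancellation directly. Carrying that out is technically delicate, but it does not affect the final constant, since the degree (equivalently, the genus) entering the estimate stays bounded as $q\to\infty$.
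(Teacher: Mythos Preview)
The paper does not supply its own proof of this statement: it records the Kloosterman bound as a classical theorem of Weil \cite{We48} (with a pointer to \cite{LN97}) and uses it as a black box in the proof of Lemma~\ref{keylemma1}. So there is nothing in the paper to compare your argument against; your proposal is a proof where the paper offers only a citation.

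Your $\ell$-adic argument is the standard one and is correct as written: the sheaf $\mathcal L$ is lisse of rank one on ${\Bbb G}_m$, the Swan conductors at $0$ and $\infty$ are each $1$ (since $at+bt^{-1}$ has simple poles there and $p>2$), Grothendieck--Ogg--Shafarevich gives $\dim H^1_c=2$, and Weil/Deligne purity yields the bound $2q^{1/2}$. The handling of the degenerate cases $ab=0$ is also fine.

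One small slip: in your alternative description via the Artin--Schreier curve, the genus is \emph{not} bounded uniformly in $q$ once the characteristic $p$ is allowed to vary---for $y^p-y=at+bt^{-1}$ the genus is $p-1$. What is bounded is the dimension of the $\chi$-isotypic component of $H^1$ under the ${\Bbb Z}/p{\Bbb Z}$-action on the cover, and that is exactly the $2$-dimensional $H^1_c({\Bbb G}_m,\mathcal L)$ you already computed. The Weil bound $|N-(q+1)|\le 2g\sqrt{q}$ applied na\"ively to the full curve would only give $2(p-1)\sqrt{q}$; one must decompose by characters (or, equivalently, stay with the sheaf-theoretic formulation) to extract the constant $2$. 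Your main argument does this correctly; only the parenthetical about bounded genus needs adjusting.
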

With the same notation as above, we have the following estimate on the
Fourier transform of the characteristic function of a non-degenerate
quadratic surface.
\begin{lemma}\label{keylemma1} Let ${\Bbb F}_q, q$ odd, be a finite field.
Then
\[|\widehat{S_j}(m)|=\Big| q^{-d}\sum_{x\in S_j}\chi(-x\cdot m)\Big|
\lesssim q^{-\frac{d+1}{2}}\] 
if $m\ne (0,\cdots,0)$, and
\[\widehat{S_j}(0,\cdots,0) \approx q^{-1}.\] \end{lemma}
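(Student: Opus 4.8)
The plan is to compute $\widehat{S_j}(m)$ explicitly using the standard trick of writing the characteristic function of the level set $\{Q(x)=j\}$ as a character sum. Namely, for any $x\in{\Bbb F}_q^d$ one has
\[ S_j(x) = q^{-1}\sum_{s\in{\Bbb F}_q}\chi\bigl(s(Q(x)-j)\bigr),\]
so that
\[ \widehat{S_j}(m) = q^{-d-1}\sum_{s\in{\Bbb F}_q}\chi(-sj)\sum_{x\in{\Bbb F}_q^d}\chi\bigl(sQ(x)-x\cdot m\bigr).\]
The $s=0$ term contributes $q^{-d-1}\sum_x\chi(-x\cdot m)$, which is $0$ for $m\ne 0$ and $q^{-1}$ for $m=0$; this already yields the second claim once we show the $s\ne 0$ terms are lower order, and in fact the $s\neq 0$ contribution to $\widehat{S_j}(0)$ will be $O(q^{-1})$ as well, giving $\widehat{S_j}(0,\dots,0)\approx q^{-1}$.

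For $s\ne 0$, the inner sum is a Gaussian integral over a non-degenerate quadratic form. After diagonalizing $Q$ (possible since $\mathrm{char}({\Bbb F}_q)>2$), say $Q(x)=\sum_{i=1}^d a_i x_i^2$ with $a_i\ne 0$, the sum factors as a product of one-dimensional sums $\sum_{x_i\in{\Bbb F}_q}\chi(sa_ix_i^2 - m_ix_i)$. Completing the square in each variable and invoking Theorem \ref{sumfomula} (with $n=2$, so $\psi$ the quadratic character) together with the Gauss sum bound of Theorem \ref{Gausssum}, each such factor equals a Gauss sum of modulus $q^{1/2}$ times a phase $\chi\bigl(-m_i^2/(4sa_i)\bigr)$. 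Hence
\[ \sum_{x\in{\Bbb F}_q^d}\chi\bigl(sQ(x)-x\cdot m\bigr) = \eta(s)\, q^{d/2}\,\chi\!\left(-\frac{1}{4s}\,\widetilde Q(m)\right),\]
where $\widetilde Q$ is the (non-degenerate) quadratic form dual to $Q$ and $\eta(s)$ is a unimodular factor of the form $\psi(s)^d\cdot c$ for a constant $c$ depending only on $Q$ and $\chi$ (a product of normalized Gauss sums). Plugging this back in,
\[ \widehat{S_j}(m) = q^{-d-1}q^{d/2}\,c\sum_{s\in{\Bbb F}_q^*}\psi(s)^d\,\chi\!\left(-sj - \frac{\widetilde Q(m)}{4s}\right) + (\text{the }s{=}0\text{ term}).\]

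It remains to bound the sum over $s\in{\Bbb F}_q^*$. When $d$ is even, $\psi(s)^d=1$ and this is a Kloosterman sum with parameters $-j$ and $-\widetilde Q(m)/4$, not both zero since $j\ne 0$; Theorem \ref{Kloosterman} gives the bound $O(q^{1/2})$. When $d$ is odd, $\psi(s)^d=\psi(s)$ and we get a Salié/twisted Kloosterman sum, bounded by $O(q^{1/2})$ via Theorem \ref{salie}. Either way,
\[ |\widehat{S_j}(m)| \lesssim q^{-d-1}\cdot q^{d/2}\cdot q^{1/2} = q^{-\frac{d+1}{2}},\]
as claimed, and for $m=0$ the $s\ne 0$ contribution is likewise $O(q^{-d-1+d/2+1/2}) = O(q^{-(d+1)/2}) = o(q^{-1})$, so the $s=0$ term dominates and $\widehat{S_j}(0)\approx q^{-1}$.

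The main obstacle is bookkeeping the phases and the unimodular constants carefully: one must verify that after completing the square the exponent is exactly a scalar multiple of $1/s$ times a \emph{non-degenerate} form $\widetilde Q$ in $m$ (so that $\widetilde Q(m)$ is not forced to vanish for $m\ne 0$ — though this actually does not matter, since $j\ne0$ already prevents both Kloosterman parameters from vanishing), and that the resulting exponential sum over ${\Bbb F}_q^*$ falls precisely into the Kloosterman (even $d$) or Salié (odd $d$) framework. The parity split on $d$ is the one genuinely case-dependent point; everything else is a routine Gauss-sum computation.
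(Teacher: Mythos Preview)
Your proposal is correct and follows essentially the same route as the paper: write $S_j(x)$ as a character sum over $s\in{\Bbb F}_q$, separate the $s=0$ term, diagonalize $Q$ and complete the square to reduce the inner sum to a product of one-variable Gauss sums, and then bound the resulting sum over $s\in{\Bbb F}_q^*$ by Kloosterman (even $d$) or Sali\'e (odd $d$). The only cosmetic difference is that the paper tracks the linear change of variables by replacing $m$ with a transformed $m'$, which you absorb implicitly into the dual form $\widetilde Q$; the computations and the parity split are otherwise identical.
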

From Lemma \ref{keylemma1}, we obtain the following corollary.
\begin{corollary}\label{cor1} \[\#S_j \approx q^{d-1}.\]\end{corollary}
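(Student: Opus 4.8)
The plan is to obtain Corollary~\ref{cor1} directly from the second assertion of Lemma~\ref{keylemma1}. First I would record the tautology that comes straight out of the definition of the Fourier transform: evaluating at the origin,
\[
\widehat{S_j}(0,\dots,0) \;=\; q^{-d}\sum_{x\in S_j}\chi(0) \;=\; q^{-d}\,\#S_j .
\]
Consequently the estimate $\widehat{S_j}(0,\dots,0)\approx q^{-1}$ supplied by Lemma~\ref{keylemma1} reads $q^{-d}\,\#S_j\approx q^{-1}$, and multiplying through by $q^{d}$ gives precisely $\#S_j\approx q^{d-1}$. Once Lemma~\ref{keylemma1} is in hand there is nothing further to do; all the content lives in the evaluation of the zero Fourier coefficient carried out in that lemma.

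For completeness I would also indicate the self-contained count that underlies the $m=0$ part of Lemma~\ref{keylemma1}. Expanding the indicator of $\{Q(x)=j\}$ through additive characters,
\[
\#S_j \;=\; \frac1q\sum_{x\in{\Bbb F}_q^d}\sum_{t\in{\Bbb F}_q}\chi\bigl(t(Q(x)-j)\bigr)
\;=\; q^{d-1}\;+\;\frac1q\sum_{t\in{\Bbb F}_q^*}\chi(-tj)\sum_{x\in{\Bbb F}_q^d}\chi\bigl(tQ(x)\bigr).
\]
Since $\mathrm{char}({\Bbb F}_q)>2$ and $Q$ is non-degenerate, a linear change of variables puts $Q$ in the form $\sum_{i=1}^d a_i y_i^2$ with every $a_i\neq0$, so that $\sum_x\chi(tQ(x))=\prod_{i=1}^d\sum_{y\in{\Bbb F}_q}\chi(ta_iy^2)$; applying Theorem~\ref{sumfomula} with $n=2$ and then Theorem~\ref{Gausssum} gives $\bigl|\sum_x\chi(tQ(x))\bigr|=q^{d/2}$ for every $t\in{\Bbb F}_q^*$. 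Hence the error term has size $O\bigl(q^{-1}\cdot q\cdot q^{d/2}\bigr)=O(q^{d/2})$, which is $o(q^{d-1})$ for $d\ge 3$; for $d=2$ the inner sum is independent of $t$, so it factors out of $\sum_{t\in{\Bbb F}_q^*}\chi(-tj)=-1$ and one obtains $\#S_j=q+O(1)$ directly. In every case $\#S_j\approx q^{d-1}$.

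I expect no genuine obstacle here. The one point worth flagging is why the corollary is deduced from the precise value $\widehat{S_j}(0,\dots,0)\approx q^{-1}$ rather than from Plancherel alone: combining the Plancherel identity with the decay estimate of Lemma~\ref{keylemma1} only yields $q^{-d}\#S_j=(q^{-d}\#S_j)^2+O(q^{-1})$, hence $\alpha(1-\alpha)=O(q^{-1})$ for $\alpha:=q^{-d}\#S_j\in(0,1]$, which still leaves open the spurious alternative $\#S_j\lesssim q^{d-1}$ with a vanishing implied constant. Ruling that alternative out requires a lower bound on $\#S_j$, and this is exactly what the Gauss sum computation above — equivalently, the $m=0$ statement of Lemma~\ref{keylemma1} — provides.
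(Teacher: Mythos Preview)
Your proof is correct and the core argument---evaluating $\widehat{S_j}(0,\dots,0)=q^{-d}\#S_j$ and invoking the second part of Lemma~\ref{keylemma1}---is exactly what the paper does. The supplementary Gauss-sum computation and the remark about Plancherel are correct but go beyond the paper's one-line deduction.
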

\begin{proof} Using the second part of Lemma \ref{keylemma1}, we have
\[\widehat{S_j}(0,\cdots,0)=q^{-d}\sum_{x\in {\Bbb F}_q^d}S_j(x)\approx
q^{-1},\] and the result follows. \end{proof}

 \subsection{Proof of Lemma \ref{keylemma1}} We first observe that
 \begin{align*}\widehat{S_j}(m)&= q^{-d}\sum_{x\in S_j}\chi(-x\cdot m)\\
 &=q^{-d}\sum_{x\in {\Bbb F}_q^d}\chi(-x\cdot m) q^{-1} \sum_{t\in {\Bbb
 F}_q} \chi(t(Q(x)-j))\\
 &=q^{-1}\delta_0(m) + q^{-d-1} \sum_{t\in {\Bbb F}_q^*}\chi(-jt)\sum_{x
 \in {\Bbb F}_q^d}\chi(tQ(x)-x\cdot m)\end{align*} where $\delta_0(m) = 1$
 if $m=(0,\cdots,0)$ and $\delta_0(m)=0$ otherwise. To complete the
 proof of Lemma \ref{keylemma1}, it suffices to show that for $j\ne
 0, m \in {\Bbb F}_q^d$,
 \begin{equation}\label{k1}D(j,m)\lesssim q^{\frac{d+1}{2}}\end{equation}
 where
 \begin{equation}\label{Substuton}D(j,m) =\sum_{t\in {\Bbb
 F}_q^*}\chi(-jt)\sum_{x \in {\Bbb F}_q^d}
 \chi(tQ(x)-x\cdot m) .\end{equation} Let
 \[ W_t(m)= \sum_{x \in {\Bbb F}_q^d}\chi(tQ(x)-x\cdot m)\]
 for $m\in {\Bbb F}_q^d , t \in {\Bbb F}_q^* $. We shall need the following
 theorem (see \cite{LN97}).
 \begin{theorem}\label{change}
 Every quadratic form $Q(x)=\sum\limits_{i,k=1}^d a_{ik}x_ix_k $
 over ${\Bbb F}_q, q $ odd, can be transformed into a diagonal form
 $a_1x_1^2+\cdots+a_d x_d^2$ over ${\Bbb F}_q$ by means of a nonsingular
 linear substitution of indeterminates. Moreover if $Q(x)$ is a
 non-degenerate quadratic form, then $a_i \ne 0$ for all
 $i=1,2,\cdots,d.$\end{theorem}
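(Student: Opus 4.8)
The plan is to prove the diagonalization by induction on the number of variables $d$, using the standard completing-the-square argument, which is available precisely because $q$ is odd, so that $2$ is invertible in ${\Bbb F}_q$. I would work throughout with the symmetric matrix $A=(a_{ik})$ attached to $Q$, recalling that a nonsingular linear substitution $x=My$ with $M$ invertible replaces $Q(x)=x^{T}Ax$ by $y^{T}(M^{T}AM)y$, so that $A$ is replaced by the congruent matrix $M^{T}AM$; the goal is then to produce an invertible $M$ for which $M^{T}AM$ is diagonal. The base case $d=1$ is immediate, since $Q=a_{11}x_1^2$ is already diagonal, and the case $Q\equiv 0$ is trivial, so I may assume $d\ge 2$ and $Q\not\equiv 0$.

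For the inductive step I would distinguish two subcases. First, if some diagonal coefficient is nonzero, then after a permutation of indices (itself a nonsingular substitution) I may assume $a_{11}\ne 0$ and write $Q=a_{11}x_1^2+2x_1\sum_{k\ge 2}a_{1k}x_k+Q'(x_2,\dots,x_d)$. Completing the square, which uses $a_{11}^{-1}$ and $2^{-1}$, gives $Q=a_{11}\bigl(x_1+a_{11}^{-1}\sum_{k\ge 2}a_{1k}x_k\bigr)^2+Q''(x_2,\dots,x_d)$ for some quadratic form $Q''$ in the $d-1$ variables $x_2,\dots,x_d$; the substitution $y_1=x_1+a_{11}^{-1}\sum_{k\ge 2}a_{1k}x_k$, $y_k=x_k$ $(k\ge 2)$ is triangular with unit diagonal, hence nonsingular, and the inductive hypothesis applied to $Q''$ completes this subcase. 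The point that must be handled with care is the second subcase, where every diagonal coefficient vanishes yet $Q\not\equiv 0$: then some $a_{ij}\ne 0$ with $i\ne j$, say $a_{12}\ne 0$, and I would apply the hyperbolic substitution $x_1=u_1+u_2$, $x_2=u_1-u_2$, $x_k=u_k$ $(k\ge 3)$, so that the cross term $2a_{12}x_1x_2$ becomes $2a_{12}(u_1^2-u_2^2)$ and produces a nonzero coefficient of $u_1^2$. This substitution has determinant $-2\ne 0$, again by $char({\Bbb F}_q)>2$, so it is nonsingular and reduces us to the first subcase.

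Finally, for the ``moreover'' assertion I would compose all the substitutions above into a single invertible matrix $M$ with $M^{T}AM=D=\mathrm{diag}(a_1,\dots,a_d)$ and take determinants, obtaining $\det D=(\det M)^2\det A$. If $Q$ is non-degenerate then $A$ is invertible, so $\det A\ne 0$, and $\det M\ne 0$ as well; hence $\prod_{i=1}^{d}a_i=\det D\ne 0$, which forces $a_i\ne 0$ for every $i$, as claimed. The only genuinely delicate step is the subcase in which all diagonal entries vanish, and it is worth noting that each appeal to the invertibility of $2$ — in completing the square and in the determinant $-2$ of the hyperbolic substitution — is exactly where the standing hypothesis $char({\Bbb F}_q)>2$ is used.
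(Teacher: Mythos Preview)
Your argument is correct and is exactly the standard proof one finds in the reference the paper cites. Note that the paper does not actually prove this theorem itself: it simply states the result and refers to \cite{LN97} (Lidl--Niederreiter, \emph{Finite Fields}). Your induction with completing the square, the hyperbolic substitution $x_1=u_1+u_2$, $x_2=u_1-u_2$ to handle the zero-diagonal case, and the determinant comparison $\det D=(\det M)^2\det A$ for the non-degeneracy claim are precisely the steps in the classical treatment, so there is nothing further to compare.
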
 Using Theorem \ref{change}, we may write
 that for some $m^{\prime}=(m_1^{\prime},\cdots,m_d^{\prime}) \in
 {\Bbb F}_q^d $, and $a_i\in {\Bbb F}_q^*$  for all $i=1,2,\cdots,d $,
 \[W_t(m)=\sum_{x \in {\Bbb F}_q^d}\chi(t\|x\|_a + x \cdot m^{\prime}),\]
 where $m^{\prime} \in {\Bbb F}_q^d$ is  determined by $ m\in {\Bbb F}_q^d$
 and
 $\|x\|_a$ is given by
 \[\|x\|_a =a_1x_1^2+\cdots+ a_d x_d^2.\]
 Since $\chi$ is an additive character of ${\Bbb F}_q$, we have
 \begin{align*}W_t(m)&=\prod_{k=1}^d \sum_{x_k\in {\Bbb F}_q}\chi(ta_k
 x_k^2+ m_k^{\prime}x_k)\\
 &=\prod_{k=1}^d\sum_{x_k\in {\Bbb
 F}_q}\chi(ta_k(x_k+(2ta_k)^{-1}m_k^{\prime})^2-(4ta_k)^{-1}{m_k^{\prime}}^2)\\
 &=\prod_{k=1}^d\chi(-(4ta_k)^{-1}{m_k^{\prime}}^2)\sum_{x_k\in
 {\Bbb F}_q}\chi(ta_k x_k^2).\end{align*} Using Theorem \ref{sumfomula},
 we see that
 \[\sum_{x_k\in {\Bbb F}_q}\chi(ta_k x_k^2) = \psi^{-1}(ta_k)G(\chi,\psi)\]
 where $\psi$ is a multiplicative character of ${\Bbb F}_q^*$ of order two
 and $G(\chi,\psi)= \sum\limits_{s\in {\Bbb F}_q^*}\chi(s)\psi(s).$ Thus we
 obtain that
 \begin{align}\label{k2}W_t(m)&=\psi^{-d}(t)\psi^{-1}(a_1\cdots a_d)
 (G(\chi,\psi))^d\prod_{k=1}^d\chi(-(4ta_k)^{-1}{m_k^{\prime}}^2)\nonumber\\
 &=\psi^{-d}(t)\psi^{-1}(a_1\cdots a_d)(G(\chi,\psi))^d
 \chi(t^{-1}\sum_{k=1}^d-(4a_k)^{-1}{m_k^{\prime}}^2).
 \end{align}
 Combining  above fact in (\ref{k2}) with (\ref{Substuton}), we obtain that
 \begin{equation}\label{k3}D(j,m)=\psi^{-1}(a_1\cdots a_d)(G(\chi,\psi))^d
 \sum_{t\in {\Bbb F}_q^* }\chi(-jt+t^{-1}M)\psi^{-d}(t),\end{equation}
 where $M$ is given by
 \[ M= \sum_{k=1}^d-(4a_k)^{-1}{m_k^{\prime}}^2.\]

 Since $\psi$ is a multiplicative character of order two, we see that
 $\psi^{-d}=1$ for d even, and $\psi^{-d}=\psi$ for d odd. Therefore, in
 order to get the inequality in (\ref{k1}) , we can apply Theorem
 \ref{Gausssum} and \ref{salie} to (\ref{k3}) for $d$  odd. On the other
 hand, if $d$ is even, we can apply Theorem \ref{Gausssum} and
 \ref{Kloosterman} to (\ref{k3}) because $j\ne 0.$ This completes the proof
 of Lemma \ref{keylemma1}.

 \section{Proof of the Tomas-Stein exponent (Theorem \ref{Main3})}
 Theorem \ref{Main3} is a result from Lemma \ref{keylemma1} in this paper 
and Lemma 6.1  in \cite{MT04}. We first introduce Lemma 6.1 in \cite{MT04}. Let $S$
 be an algebraic variety in ${\Bbb F}_q^d $ with a
 normalized surface measure $d\sigma$. We introduce the
 Bochner-Riesz kernel
 \[K(m) :=\widehat{d\sigma}(m)-\delta_0(m)\]
 where $\delta_0(m)=1$ if $m=(0,\cdots,0)$ and $\delta_0(m)=0$
 otherwise. We need the following theorem. For a nice proof, see
 Lemma 6.1 in \cite{MT04}.
 \begin{theorem}\label{Tao} Let $p,r \ge 2,$ and $ {\Bbb F}_q^d $ be a
 $d$-dimensional vector space over ${\Bbb F}_q.$
 Suppose that
 \[\|K\|_{L^{\infty}({\Bbb F}_q^d, dm)} = \|
 \widehat{d\sigma}-\delta_0\|_{L^{\infty}({\Bbb F}_q^d, dm)}
 \lesssim q^{-\frac{\tilde{d}}{2}}\] 
for some $0< \tilde{d} < d.$
 Then for any $0<\theta < 1$, we have
 \[R^*\left(p\rightarrow \frac{r}{\theta}\right) \lesssim 1
 +R^*\left(p\rightarrow r \right)^\theta
 q^{-\frac{\tilde{d}(1-\theta)}{4}}.\]\end{theorem}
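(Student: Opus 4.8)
The plan is to run the finite field version of the Tomas--Stein $TT^{*}$ argument. Write $s:=r/\theta$ and let $E$ denote the extension operator $Ef=\widehat{fd\sigma}$, so that $R^*(p\to s)=\|E\|_{L^{p}(S,d\sigma)\to L^{s}({\Bbb F}_q^{d},dm)}$. I will focus on $p=2$, the case used in this paper's applications (it is what is needed to deduce Theorem \ref{Main3}); the extension to $p\ge 2$ is of the same nature (cf. \cite{MT04}). For $p=2$, duality gives $\|E\|_{L^{2}(d\sigma)\to L^{s}(dm)}=\|E^{*}\|_{L^{s'}(dm)\to L^{2}(d\sigma)}$, and the $TT^{*}$ identity then yields
\[R^*(2\to s)^{2}=\|E^{*}\|_{L^{s'}(dm)\to L^{2}(d\sigma)}^{2}=\|EE^{*}\|_{L^{s'}(dm)\to L^{s}(dm)}.\]
A short computation with the additive characters shows that $EE^{*}g$ is the convolution, in the variable $m$, of $g$ with $\widehat{d\sigma}=\delta_{0}+K$; hence it suffices to bound the norm of the operator $g\mapsto g+K*g$ from $L^{s'}(dm)$ to $L^{s}(dm)$.

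The identity part is trivial: since $dm$ is counting measure and $s\ge 2\ge s'$, the inclusion $\ell^{s'}\subset\ell^{s}$ gives $\|I\|_{L^{s'}(dm)\to L^{s}(dm)}\le 1$, and this is the origin of the harmless summand ``$1$'' in the conclusion.

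For the convolution-with-$K$ part I would record two endpoint estimates and interpolate. On the one hand, Young's inequality together with the hypothesis $\|K\|_{L^{\infty}(dm)}\lesssim q^{-\tilde{d}/2}$ gives the $L^{1}(dm)\to L^{\infty}(dm)$ bound $\|K*g\|_{L^{\infty}(dm)}\le\|K\|_{L^{\infty}(dm)}\|g\|_{L^{1}(dm)}\lesssim q^{-\tilde{d}/2}\|g\|_{L^{1}(dm)}$. On the other hand, the convolution operator $g\mapsto K*g$ equals $EE^{*}-I$, so by the definition of $R^{*}$, the inequality $\|g\|_{L^{r}}\le\|g\|_{L^{r'}}$ (valid since $r\ge 2$), and the easy lower bound $R^{*}(2\to r)\gtrsim 1$ (test $f\equiv 1$), we obtain the $L^{r'}(dm)\to L^{r}(dm)$ bound $\|K*g\|_{L^{r}(dm)}\le(R^*(2\to r)^{2}+1)\|g\|_{L^{r'}(dm)}\lesssim R^{*}(2\to r)^{2}\|g\|_{L^{r'}(dm)}$. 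Interpolating these two estimates by Riesz--Thorin with weight $\theta$ on the second one lands exactly at $L^{s'}(dm)\to L^{s}(dm)$, since $\tfrac1s=\tfrac\theta r$ and $\tfrac1{s'}=(1-\theta)+\tfrac\theta{r'}$, and gives the norm bound $(q^{-\tilde{d}/2})^{1-\theta}(R^{*}(2\to r)^{2})^{\theta}=q^{-\tilde{d}(1-\theta)/2}R^{*}(2\to r)^{2\theta}$. Combining with the identity part,
\[R^*(2\to s)^{2}=\|EE^{*}\|_{L^{s'}(dm)\to L^{s}(dm)}\lesssim 1+q^{-\tilde{d}(1-\theta)/2}R^{*}(2\to r)^{2\theta},\]
and taking square roots yields $R^*(2\to r/\theta)\lesssim 1+q^{-\tilde{d}(1-\theta)/4}R^{*}(2\to r)^{\theta}$, as claimed.

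I expect the only genuine bookkeeping point to be the interpolation step: one must verify that Riesz--Thorin applied to the pair $\{L^{1}\to L^{\infty},\,L^{r'}\to L^{r}\}$ at parameter $\theta$ reproduces precisely the pair of exponents $((r/\theta)',\,r/\theta)$. It is this matching that pins the interpolation parameter to $\theta$ and thereby fixes the final gain as $q^{-\tilde{d}(1-\theta)/4}$ after the square root coming from the $TT^{*}$ reduction. The other ingredients --- the $TT^{*}$ identity and the identification of $EE^{*}$ with convolution by $\delta_{0}+K$ --- are routine, once the differing normalizations of $dx$ and $dm$ (arranged so that Plancherel reads $\|\widehat f\|_{L^{2}(dm)}=\|f\|_{L^{2}(dx)}$) are kept straight.
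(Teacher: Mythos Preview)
The paper does not give its own proof of this statement; it simply records the result and refers the reader to Lemma~6.1 of \cite{MT04}. Your $p=2$ argument is correct and is exactly the Tomas--Stein $TT^{*}$ argument carried out there: the identification $EE^{*}g=g*\widehat{d\sigma}=g+K*g$, the trivial $\ell^{s'}\hookrightarrow\ell^{s}$ bound for the identity piece, and Riesz--Thorin between the $L^{1}\!\to\! L^{\infty}$ bound on $g\mapsto K*g$ coming from the decay hypothesis and the $L^{r'}\!\to\! L^{r}$ bound coming from $R^{*}(2\to r)$. Your check that the interpolation at parameter $\theta$ lands on the pair $((r/\theta)',r/\theta)$ and the final square root are both fine.

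The only point on which you fall short of the stated theorem is the restriction to $p=2$. Your remark that the $p=2$ case is all that the present paper uses (in the proof of Theorem~\ref{Main3}) is accurate, and the monotonicity $R^{*}(p\to s)\le R^{*}(2\to s)$ for $p\ge 2$ (recorded as \eqref{fact}) immediately gives $R^{*}(p\to r/\theta)\lesssim 1+R^{*}(2\to r)^{\theta}q^{-\tilde d(1-\theta)/4}$ from your argument; getting $R^{*}(p\to r)^{\theta}$ rather than $R^{*}(2\to r)^{\theta}$ on the right-hand side, as in the full statement, requires the slightly more general bilinear/dual version of the $TT^{*}$ step in \cite{MT04}, which you correctly cite rather than reproduce.
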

 We are now ready to prove Theorem \ref{Main3}. Recall that
 we are working with a non-degenerate quadratic surface $S_j$ in ${\Bbb
 F}_q^d.$ We now check that \begin{equation}\label{k9}
 \|K\|_{L^{\infty}({\Bbb F}_q^d,dm)} \lesssim
 q^{-\frac{(d-1)}{2}}.\end{equation}
 In fact, if $m=(m_1, m_2, \cdots,m_d) \ne (0,\cdots,0)$ then we
 have
 \begin{align*} K(m) = \widehat{d\sigma}(m)
 &=(\#S_j)^{-1}\sum_{x\in S_j}\chi(-x\cdot m)\\
 &=(\#S_j)^{-1}\sum_{x \in {\Bbb F}_q^d}\chi(-x\cdot m)S_j(x)\\
 &=(\#S_j)^{-1}q^d \widehat{S_j}(m).
 \end{align*}
 From Corollary \ref{cor1} and Lemma \ref{keylemma1}  , we have
 \[ (\#S_j)\approx q^{d-1} \quad \mbox{and}\quad
 |\widehat{S_j}(m)|\lesssim q^{-\frac{d+1}{2}} \quad \mbox{for}
 \,\, m \ne (0,\cdots,0).\] We therefore obtain that
 \[ |K(m)| \lesssim q^{-\frac{d-1}{2}} \quad \mbox{for}\,\, m\ne
 (0,\cdots,0).\]
 On the other hand, we have
 \[K(0,\cdots,0) = \widehat{d\sigma}(0,\cdots,0) -1 =0.\]
 Thus the inequality in (\ref{k9}) holds. We now claim that
 \begin{equation}\label{k10} R^*(2\rightarrow 2) \approx q^{\frac{1}{2}}.
 \end{equation}
 To justify above claim, we shall show that
 \begin{equation}\label{Tag14}\|\widehat{fd\sigma} \|_{L^2({\Bbb
 F}_q^d,dm)}\approx q^{\frac{1}{2}} \|f\|_{L^2(S_j,d\sigma)}\end{equation}
 for all functions $f$ on $S_j.$ We first note that
 \begin{align*}|\widehat{fd\sigma}(m)|^2 &=(\#S_j)^{-2}\sum_{x\in
 S_j}\chi(-x\cdot m)  f(x)\sum_{y\in S_j}\chi(y\cdot m) \overline{f(y)}\\
 &=(\#S_j)^{-2}\sum_{x, y \in S_j} \chi((y-x)\cdot
 m)f(x)\overline{f(y)}.\end{align*} We have
 \begin{align*} \|\widehat{fd\sigma} \|_{L^2({\Bbb F}_q^d,dm)}
 &= \Big(\sum_{m\in {\Bbb F}_q^d} |\widehat{fd\sigma}
 (m)|^2\Big)^{\frac{1}{2}}\\
 &=(\#S_j)^{-1}\Big( \sum_{x, y \in S_j} \sum_{m\in {\Bbb
 F}_q^d}\chi((y-x)\cdot m)f(x)\overline{f(y)}\Big)^{\frac{1}{2}}\\
 &=(\#S_j)^{-1}q^{\frac{d}{2}}\Big(\sum_{x \in
 S_j}|f(x)|^2\Big)^{\frac{1}{2}}\\
 &=(\#S_j)^{-1}q^{\frac{d}{2}}(\#S_j)^{\frac{1}{2}}\|f\|_{L^2(S_j,d\sigma)}
 \approx q^{\frac{1}{2}}\|f\|_{L^2(S_j,d\sigma)}\end{align*} In the
 last equality, we used the fact that $ \#S_j \approx q^{d-1}$. Thus
 our claim in (\ref{k10}) is proved. Using Theorem \ref{Tao} with
 (\ref{k9}) and (\ref{k10}), we obtain that for any $0<\theta<1$,
 \begin{align*} R^*(2\rightarrow \frac{2}{\theta})&
 \lesssim 1+ R^*(2\rightarrow 2)^\theta q^{-\frac{(d-1)(1-\theta)}{4}}\\
  &\lesssim
 1+q^{\frac{\theta}{2}}q^{-\frac{(d-1)(1-\theta)}{4}}.\end{align*}
 Taking $0<\theta\le\frac{d-1}{d+1},$ we have
 \[R^*(2\rightarrow\frac{2}{\theta})\lesssim 1.\]
 Thus Theorem \ref{Main3} is proved with $r=\frac{2}{\theta}.$

 \section{Proof of the $L^2 \rightarrow L^4$ estimate (Theorem
 \ref{Main2})}
 To prove Theorem \ref{Main2}, we make the following reduction.
 \begin{lemma}\label{sum} Let $S_j$ be a non-degenerate quadratic surface
 in
 ${\Bbb F}_q^d$ defined as in (\ref{surface}). Suppose that for any
 $x\in({\Bbb F}_q^d)^* ={\Bbb F}_q^d\setminus (0,\cdots , 0)$, we have
 \[\sum_{\{(\alpha,\beta)\in S_j\times S_j: \alpha+\beta =x\}} 1 \lesssim
 q^{d-2}.\]
 Then for $d\ge 2$,
 \[R^*(2\rightarrow 4) \lesssim 1.\]
 \end{lemma}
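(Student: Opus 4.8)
The plan is to reduce the $L^2 \to L^4$ extension estimate to an $L^2$-based estimate via duality and then expand the resulting fourth moment into a counting problem that the hypothesis of the lemma controls. First I would observe that by duality and the standard $TT^*$ argument, the estimate $R^*(2\to 4)\lesssim 1$ is equivalent to the estimate $\|\widehat{fd\sigma}\|_{L^4({\Bbb F}_q^d,dm)}\lesssim \|f\|_{L^2(S_j,d\sigma)}$, and since $L^4$ is an even integer exponent I would square it: it suffices to show $\|\widehat{fd\sigma}\|_{L^4}^2 = \|(\widehat{fd\sigma})^2\|_{L^2} \lesssim \|f\|_{L^2(S_j,d\sigma)}^2$. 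Writing $(\widehat{fd\sigma})(m)^2$ explicitly and using Plancherel on the product, one converts $\|(\widehat{fd\sigma})^2\|_{L^2}^2$ into a sum over quadruples $(\alpha,\beta,\gamma,\delta)\in S_j^4$ with the constraint $\alpha+\beta=\gamma+\delta$, weighted by $f(\alpha)f(\beta)\overline{f(\gamma)f(\delta)}$ and normalized by $(\#S_j)^{-4}q^d$.

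Next I would estimate this quadruple sum. By Cauchy--Schwarz in the pair $(\alpha,\beta)$ versus $(\gamma,\delta)$, the weighted sum is bounded by $(\#S_j)^{-4}q^d$ times $\sum_{x}\nu(x)|F(x)|$ roughly, where $\nu(x) = \#\{(\alpha,\beta)\in S_j\times S_j : \alpha+\beta=x\}$ is the additive energy counting function and $F(x)=\sum_{\alpha+\beta=x} f(\alpha)f(\beta)$; more cleanly, the sum equals $\sum_x |F(x)|^2$ and one bounds $|F(x)|^2 \le \nu(x)\sum_{\alpha+\beta=x}|f(\alpha)|^2|f(\beta)|^2$ by Cauchy--Schwarz. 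The hypothesis of the lemma gives $\nu(x)\lesssim q^{d-2}$ for all $x\ne 0$, and for $x=0$ one has the trivial bound $\nu(0)\le \#S_j \approx q^{d-1}$ (from Corollary \ref{cor1}); I would need to check that the $x=0$ term, which carries an extra factor of $q$, is still acceptable — it contributes $\sum_{\alpha+\beta=0}|f(\alpha)|^2|f(\beta)|^2 \le \|f\|_{L^\infty}^2\sum_{\alpha}|f(\alpha)|^2$, and after dividing by the $(\#S_j)^4$ normalization this is lower order, or can be absorbed by treating $f$ as essentially constant via the usual reduction to characteristic functions. Summing the main term over $x$ and using $\sum_x \sum_{\alpha+\beta=x}|f(\alpha)|^2|f(\beta)|^2 = \left(\sum_\alpha |f(\alpha)|^2\right)^2$, one obtains a bound of $(\#S_j)^{-4}q^d\cdot q^{d-2}\cdot\left(\sum_\alpha|f(\alpha)|^2\right)^2$. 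Recalling $\#S_j\approx q^{d-1}$ and $\|f\|_{L^2(S_j,d\sigma)}^2 = (\#S_j)^{-1}\sum_\alpha |f(\alpha)|^2$, the powers of $q$ cancel: $q^d\cdot q^{d-2}\cdot (\#S_j)^{-4}\cdot(\#S_j)^2 = q^{2d-2}q^{-2(d-1)} = 1$, giving exactly $\|\widehat{fd\sigma}\|_{L^4}^4 \lesssim \|f\|_{L^2(S_j,d\sigma)}^4$.

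The main obstacle I anticipate is handling the diagonal contribution $x=0$ (and more generally making the Cauchy--Schwarz step clean enough that the $x=0$ term with its weaker bound $\nu(0)\approx q^{d-1}$ does not break the estimate). The cleanest fix is the standard reduction that in establishing $R^*(2\to 4)\lesssim 1$ it suffices to take $f = \chi_E$ for $E\subset S_j$ — one uses the layer-cake/normalization argument, or simply notes that restriction estimates at $L^2$ are governed by their values on characteristic functions up to logarithmic losses that can be removed by the $\epsilon$-free structure here — after which $\sum_{\alpha+\beta=0}|f(\alpha)|^2|f(\beta)|^2 = \#\{\alpha\in E: -\alpha\in E\} \le \#E = \sum_\alpha|f(\alpha)|^2$, so the $x=0$ term contributes $(\#S_j)^{-4}q^d\cdot\#E$, which is $\lesssim q^d q^{-4(d-1)}\#E \ll (\#E)^2/(\#S_j)^2$ whenever $\#E \gtrsim 1$ provided $d\ge 2$ — indeed $q^d q^{-4(d-1)} = q^{-3d+4}\le q^{-2}$ for $d\ge 2$, which is dominated by $(\#S_j)^{-2}\approx q^{-2(d-1)}$. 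So the diagonal is harmless and the argument closes; the substance of the lemma is entirely in pushing the fourth-moment identity down to the additive-energy count and invoking the hypothesized bound $\nu(x)\lesssim q^{d-2}$.
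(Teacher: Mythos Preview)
Your main-term argument is essentially the paper's: write $\|\widehat{fd\sigma}\|_{L^4}^4$ via Plancherel as $\|fd\sigma\ast fd\sigma\|_{L^2(dx)}^2$, apply Cauchy--Schwarz pointwise in $x$ to dominate $(fd\sigma\ast fd\sigma)(x)^2$ by $(d\sigma\ast d\sigma)(x)\cdot(f^2d\sigma\ast f^2d\sigma)(x)$, use the hypothesis to get $d\sigma\ast d\sigma(x)\lesssim 1$ for $x\ne 0$, and finish with $\|f^2d\sigma\ast f^2d\sigma\|_{L^1}=\|f\|_{L^2(S_j,d\sigma)}^4$. Up to notation this is identical to the paper's proof for $x\ne 0$.

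The gap is in your treatment of $x=0$. Your Cauchy--Schwarz step gives $|F(0)|^2\le\nu(0)\sum_{\alpha}|f(\alpha)|^2|f(-\alpha)|^2$, and with $\nu(0)\approx q^{d-1}$ this leads (after normalization) to needing $q\,\|f\|_\infty^2\lesssim\sum_\alpha|f(\alpha)|^2$, which is false for general $f$ (take $f$ a point mass). Your proposed rescue, reducing to characteristic functions, is not a free move: the passage from restricted to strong-type $L^2\to L^4$ bounds costs a logarithm in general, and nothing ``$\epsilon$-free'' here removes it. The paper avoids this entirely by \emph{not} applying the lossy Cauchy--Schwarz at $x=0$. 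Instead it bounds the single value
\[
|fd\sigma\ast fd\sigma(0,\dots,0)|\le \sum_{m}|\widehat{fd\sigma}(m)|^2=(\#S_j)^{-2}q^d\sum_{x\in S_j}|f(x)|^2\approx q\,\|f\|_{L^2(S_j,d\sigma)}^2,
\]
so that $q^{-d}|fd\sigma\ast fd\sigma(0,\dots,0)|^2\lesssim q^{2-d}\|f\|_{L^2}^4\lesssim\|f\|_{L^2}^4$ for $d\ge2$. Equivalently, in your notation, bound $|F(0)|=|\sum_\alpha f(\alpha)f(-\alpha)|\le\sum_\alpha|f(\alpha)|^2$ directly by Cauchy--Schwarz on the \emph{sum} rather than on the energy; this single line closes the argument for general $f$ with no reduction needed.
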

 \begin{proof} We have to show that
 \[\|\widehat{fd\sigma}\|_{L^4({\Bbb F}_q^d,dm)} \lesssim
 \|f\|_{L^2(S_j,d\sigma)}\]
 for all functions $f$ on $S_j.$ Using Plancherel, we have
 \begin{align*} \|\widehat{fd\sigma}\|_{L^4({\Bbb F}_q^d,dm)}
 &=\|\widehat{fd\sigma}\widehat{fd\sigma}\|^{\frac{1}{2}} _{L^2({\Bbb
 F}_q^d,dm)} \\
 &=\|fd\sigma \ast fd\sigma\|^{\frac{1}{2}} _{L^2({\Bbb F}_q^d,dx)}
 \end{align*} and so it suffices to show that
 \[\|fd\sigma \ast fd\sigma\|^2_{L^2({\Bbb F}_q^d,dx)} \lesssim
 \|f\|^4_{L^2(S_j,d\sigma)}.\]
 It follows that
 \[\|fd\sigma \ast fd\sigma\|^2_{L^2({\Bbb F}_q^d,dx)}\]
 \[=q^{-d}|fd\sigma \ast fd\sigma (0,\cdots,0)|^2 + \|fd\sigma \ast
 fd\sigma\|^2_{L^2(({\Bbb F}_q^d)^*,dx)}\]
 Thus it will suffice to show that
 \begin{equation}\label{tag1} q^{-d}|fd\sigma \ast fd\sigma
 (0,\cdots,0)|^2\lesssim \|f\|^4_{L^2(S_j,d\sigma)}\end{equation}
 and
 \begin{equation}\label{tag2}\|fd\sigma \ast fd\sigma\|^2_{L^2(({\Bbb
 F}_q^d)^*,dx)}\lesssim \|f\|^4_{L^2(S_j,d\sigma)}\end{equation}
 We first show that the inequality in (\ref{tag1}) holds. We have
 \begin{align*}|fd\sigma \ast fd\sigma (0,\cdots,0)|&\le \sum_{m \in {\Bbb
 F}_q^d } |\widehat{fd\sigma}(m)|^2 \\
 &=(\#S_j)^{-2}q^d\sum_{x\in S_j}|f(x)|^2\\
 &=(\#S_j)^{-1}q^d \|f\|^2_{L^2(S_j,d\sigma)} \approx q
 \|f\|^2_{L^2(S_j,d\sigma)}.
 \end{align*}
 Thus the inequality in (\ref{tag1}) holds because $d\ge2.$
 It remains to show that the inequality in (\ref{tag2}) holds.
 Without loss of generality, we may assume that $f$ is positive. Using the
 Cauchy Schwartz  inequality, we see that
 \begin{align} \label{tag3}
 &fd\sigma \ast fd\sigma(x) \\
 &= (\#S_j)^{-2}q^d \sum_{\{(\alpha,\beta)\in S_j\times S_j :\alpha+\beta
 =x\}} f(\alpha)f(\beta)\nonumber\\
 &\le(\#S_j)^{-2}q^d \Big(\sum_{\{(\alpha,\beta)\in S_j\times S_j
 :\alpha+\beta =x\}} 1\Big)^{\frac{1}{2}}
 \Big(\sum_{\{(\alpha,\beta)\in S_j\times S_j :\alpha+\beta
 =x\}}f^2(\alpha)f^2(\beta)\Big)^{\frac{1}{2}}\nonumber\\
 &=(d\sigma \ast d\sigma)^{\frac{1}{2}}(x) (f^2d\sigma \ast
 f^2d\sigma)^{\frac{1}{2}}(x).\nonumber \end{align} From our hypothesis and
 the fact that $\#S_j \approx q^{d-1}$, we obtain that for $x \ne
 (0,\cdots,0),$
 \begin{equation}\label{tag4}d\sigma \ast d\sigma(x) \approx q^{-d+2}
  \sum_{\{(\alpha,\beta)\in S_j\times S_j: \alpha+\beta =x\}} 1 \lesssim
 1.\end{equation}
 From Fubini's theorem, we also have
 \begin{equation}\label{tag5} \|f^2d\sigma \ast f^2d\sigma\|_{L^1({\Bbb
 F}_q^d,dx)}=\|f\|^4_{L^2(S_j,d\sigma)}.
 \end{equation}
 Using H\"older inequality and estimates (\ref{tag3}), (\ref{tag4}) , and
 (\ref{tag5}), we obtain that
 \begin{align*}\|fd\sigma \ast fd\sigma\|^2_{L^2(({\Bbb F}_q^d)^*,dx)}
  &=\|(fd\sigma \ast fd\sigma)^2\|_{L^1(({\Bbb F}_q^d)^*,dx)}\\
 &\le\|(d\sigma \ast d\sigma)\cdot (f^2d\sigma \ast
 f^2d\sigma)\|_{L^1(({\Bbb F}_q^d)^*,dx)}\\
 &\le \|d\sigma \ast d\sigma\|_{L^\infty(({\Bbb F}_q^d)^*,dx)}\|f^2d\sigma
 \ast f^2d\sigma\|_{L^1(({\Bbb F}_q^d)^*,dx)}\\
 &\lesssim \|f\|^4_{L^2(S_j,d\sigma)}.
 \end{align*}
 Thus the inequality in (\ref{tag2}) holds and so the proof of
 Lemma \ref{sum} is complete.\end{proof} We now prove Theorem
 \ref{Main2}. By Lemma \ref{sum}, it is enough to show that for any
 $x \in ({\Bbb F}_q^d)^*, d\ge 2,$
 \begin{equation}\label{tag6} \sum_{\{(\alpha,\beta)\in S_j\times S_j:
 \alpha+\beta =x\}} 1 \lesssim q^{d-2}\end{equation}
 where  $S_j$ is the non-degenerate quadratic surface in ${\Bbb
 F}_q^d$. Using Theorem \ref{change}, we may assume that the
 non-degenerate quadratic surface in ${\Bbb F}_q^d$ is given by
 \[ S_j=\{y\in {\Bbb F}_q^d: a_1y_1^2 +\cdots+a_d y_d^2=j\ne 0\}\]
 for all $a_k\ne 0,\,\,\,k=1,2,\cdots,d.$ Therefore the left hand side of
 the equation in (\ref{tag6}) can be estimated by
 the number of common solutions $\alpha=(\alpha_1,\cdots,\alpha_d)$ in
 ${\Bbb F}_q^d$ of the equations
 \begin{align}\label{tag7} a_1\alpha_1^2+\cdots+a_d\alpha_d^2&=j\nonumber\\
 2a_1x_1\alpha_1+\cdots+ 2a_d x_d\alpha_d&=\sum_{k=1}^d a_k x_k^2
 \end{align}
 for $x=(x_1,\cdots,x_d)\ne(0,\cdots,0)$ and $a_k\ne 0$ for all
 $k=1,2,\cdots,d.$ Note that $2a_k x_k\ne 0$ for some
 $k=1,2,\cdots,d$ because $ x \ne (0,\cdots,0)$ and $a_k\ne 0.$
 Thus a routine algebraic computation shows that the number of
 common solutions of equations in (\ref{tag7}) is less than equal
 to $2q^{d-2}.$ This means that the inequality in (\ref{tag6})
 holds and so we complete the proof of Theorem \ref{Main2}.

 \section{Incidence theorems and the proof of Theorem \ref{Incidence1} and
 Theorem \ref{Incidence3}}
 The purpose of this section is to develop the incidence theory needed to
 prove both Theorem \ref{Incidence1} and Theorem \ref{Incidence3}.
  \begin{theorem}\label{incidence}Let $S_j$ be a non-degenerate quadratic
 surface in ${\Bbb F}_q^d$ defined as before.
 If $E$ is any subset of $S_j$, then we have
 \[ \sum_{\{(x,y)\in E\times E : x-y+z \in S_j\}}1 \lesssim   (\#E)^2
 q^{-1} +(\#E) q^{\frac{d-1}{2}}\]
 for all $z \in {\Bbb F}_q^d$ where the bound is independent of $z \in
 {\Bbb F}_q^d$.
 \end{theorem}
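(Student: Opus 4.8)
The plan is to count the pairs by a direct Fourier expansion, using only the decay of $\widehat{S_j}$ recorded in Lemma \ref{keylemma1} together with Plancherel. First I would rewrite the left-hand side as
\[ \sum_{\{(x,y)\in E\times E : x-y+z\in S_j\}} 1 = \sum_{x,y\in {\Bbb F}_q^d} E(x)\,E(y)\, S_j(x-y+z), \]
and then apply the Fourier inversion formula $S_j(w)=\sum_{m\in {\Bbb F}_q^d}\chi(w\cdot m)\widehat{S_j}(m)$ with $w=x-y+z$. Interchanging the order of summation, the count becomes
\[ \sum_{m\in {\Bbb F}_q^d} \widehat{S_j}(m)\,\chi(z\cdot m)\,\Big|\sum_{x\in E}\chi(x\cdot m)\Big|^2. \]
Since $|\chi(z\cdot m)|=1$, the uniformity in $z$ will be automatic once everything is bounded by absolute values.

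Next I would split off the frequency $m=(0,\dots,0)$. By the second part of Lemma \ref{keylemma1}, $\widehat{S_j}(0,\dots,0)\approx q^{-1}$, and the exponential sum at $m=0$ equals $(\#E)^2$, so this term contributes $\approx (\#E)^2 q^{-1}$, which is the first term on the right-hand side of the claim. For the remaining frequencies $m\ne(0,\dots,0)$, the first part of Lemma \ref{keylemma1} gives $|\widehat{S_j}(m)|\lesssim q^{-\frac{d+1}{2}}$, so by the triangle inequality the tail is bounded by
\[ q^{-\frac{d+1}{2}}\sum_{m\in {\Bbb F}_q^d}\Big|\sum_{x\in E}\chi(x\cdot m)\Big|^2. \]

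Finally, expanding the square and using the orthogonality relation $\sum_{m\in {\Bbb F}_q^d}\chi((x-y)\cdot m)=q^d$ when $x=y$ and $0$ otherwise (equivalently, Plancherel), I get $\sum_{m}\big|\sum_{x\in E}\chi(x\cdot m)\big|^2 = q^d\,\#E$, hence the tail is $\lesssim q^{-\frac{d+1}{2}}\cdot q^d\,\#E = (\#E)\,q^{\frac{d-1}{2}}$. Adding the two contributions gives the claimed estimate. There is no real obstacle here: the bound is a clean consequence of the Fourier decay already established in Lemma \ref{keylemma1}, and the only point requiring (minor) care is keeping the $m=0$ frequency separate, so that the weak decay $\widehat{S_j}(0,\dots,0)\approx q^{-1}$ feeds only into the $(\#E)^2 q^{-1}$ term and does not contaminate the tail. (If one preferred not to invoke Lemma \ref{keylemma1}, the same argument would go through after diagonalizing $Q$ via Theorem \ref{change} and re-deriving the decay from the Gauss, Sali\'e, and Kloosterman bounds, but that is unnecessary.)
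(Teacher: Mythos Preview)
Your proposal is correct and is essentially identical to the paper's own proof: the paper also rewrites the count as $\sum_{x,y}E(x)E(y)S_j(x-y+z)$, applies Fourier inversion to $S_j$, separates the $m=(0,\dots,0)$ term (using $\widehat{S_j}(0,\dots,0)\approx q^{-1}$) from the nonzero frequencies (using $|\widehat{S_j}(m)|\lesssim q^{-\frac{d+1}{2}}$), and closes with Plancherel. The only cosmetic difference is that the paper expresses the frequency sum as $q^{2d}\sum_m |\widehat{E}(m)|^2 \chi(m\cdot z)\widehat{S_j}(m)$ using the normalized transform $\widehat{E}$, whereas you keep the unnormalized exponential sum $\big|\sum_{x\in E}\chi(x\cdot m)\big|^2$; these are of course the same up to the factor $q^{2d}$.
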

 \begin{proof} Fix $E \subset S_j.$ For each $z \in {\Bbb F}_q^d$, consider
 \begin{align*} &\sum_{\{(x,y)\in E\times E : x-y+z \in S_j\}}1\\
 &=\sum_{(x,y)\in {\Bbb F}_q^d \times {\Bbb F}_q^d} E(x)E(y) S_j(x-y+z)\\
 &=\sum_{(x,y)\in {\Bbb F}_q^d \times {\Bbb F}_q^d } E(x)E(y)\sum_{m\in
 {\Bbb F}_q^d} \chi(m\cdot(x-y+z))\widehat{S_j}(m)\\
 &=q^{2d} \sum_{m\in {\Bbb F}_q^d} |\widehat{E}(m)|^2\chi(m\cdot z)
 \widehat{S_j}(m)= \mbox{I} +\mbox{II}\end{align*}
 where
 \[\mbox{I}=q^{2d}|\widehat{E}(0,\cdots,0)|^2 \widehat{S_j}(0,\cdots,0)\]
 and
 \[\mbox{II} = q^{2d}\sum_{m\ne (0,\cdots,0)} |\widehat{E}(m)|^2\chi(m\cdot z)
 \widehat{S_j}(m).\]
 Using Lemma \ref{keylemma1} and  Plancherel , we obtain that
 \[\mbox{I} \approx (\#E)^2 q^{-1},\]
 and
 \begin{align*}|\mbox{II}| &\lesssim q^{2d} q^{-\frac{d+1}{2}} \sum_{m\ne
 (0,\cdots,0)} |\widehat{E}(m)|^2 \\
 &\le q^{2d}q^{-\frac{d+1}{2}}q^{-d} \sum_{x\in {\Bbb F}_q^d} |E(x)|^2
 =q^{\frac{d-1}{2}}(\#E).\end{align*}
 This completes the proof. \end{proof}
  \begin{corollary}\label{CorIncidence}
  Let $S_j$ be a non-degenerate quadratic surface in ${\Bbb F}_q^d$ and $E$
 be any subset of $S_j$. Then we have
 \[\sum_{\{(x,y,z,s)\in E^4 : x+z=y+s \}}1 \lesssim \min \{ (\#E)^3 ,
 (\#E)^3 q^{-1} +(\#E)^2 q^{\frac{d-1}{2}}\}.\]
 \end{corollary}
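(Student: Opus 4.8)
The plan is to derive Corollary \ref{CorIncidence} directly from Theorem \ref{incidence} by a counting argument. First I would rewrite the quantity we wish to bound: the number of quadruples $(x,y,z,s)\in E^4$ with $x+z=y+s$ equals
\[ \Lambda(E):=\sum_{\{(x,y,z,s)\in E^4 : x+z=y+s\}} 1 = \sum_{w\in {\Bbb F}_q^d}\Big(\sum_{\{(x,z)\in E\times E: x+z=w\}}1\Big)^2 = \sum_{w\in {\Bbb F}_q^d} \nu_E(w)^2, \]
where $\nu_E(w)=\#\{(x,z)\in E\times E: x+z=w\}$ is the additive energy counting function. Equivalently, $\Lambda(E)=\#\{(x,y,z,s)\in E^4: x-y = s-z\}$, so it counts the number of ways a difference $x-y$ with $x,y\in E$ can be repeated as $s-z$ with $s,z\in E$.

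Next I would connect $\Lambda(E)$ to the sum appearing in Theorem \ref{incidence}. The key observation is that the quantity $\sum_{\{(x,y)\in E\times E: x-y+z\in S_j\}}1$, summed against $E(s-z)$ over $z$, recovers $\Lambda(E)$: indeed, since $E\subset S_j$, for fixed $x,y\in E$ the condition $x-y+z\in S_j$ together with $z = s - x + y$ for some $s\in E$ is exactly what makes $(x,y,s-x+y,s)$ — or rather, reindexing, we observe that writing $z=s-u$ and noting the condition $x-y+z\in S_j$ is automatically satisfied precisely when $x-y+z$ lies on the surface. More cleanly: I would set, for each $v\in{\Bbb F}_q^d$, $N(v):=\#\{(x,y)\in E\times E: x-y=v\}$, so that $\Lambda(E)=\sum_{v} N(v)^2 = \sum_v N(v)\cdot N(v)$, and the inner $N(v)$ can be bounded: $\sum_{\{(x,y)\in E\times E: x-y=v\}}1 \le \sum_{\{(x,y)\in E\times E: x-y+z\in S_j\}}1$ whenever $v+z\in S_j$; choosing $z$ appropriately (any $z$ with $v+z\in S_j$, which exists since $\#S_j\approx q^{d-1}>0$) and applying Theorem \ref{incidence} gives $N(v)\lesssim (\#E)^2 q^{-1}+(\#E)q^{\frac{d-1}{2}}$ uniformly in $v$. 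Since $\sum_v N(v) = (\#E)^2$, we conclude
\[ \Lambda(E)=\sum_v N(v)^2 \le \Big(\max_v N(v)\Big)\sum_v N(v) \lesssim \big((\#E)^2 q^{-1}+(\#E)q^{\frac{d-1}{2}}\big)(\#E)^2 = (\#E)^3 q^{-1}+(\#E)^3 q^{\frac{d-1}{2}-1}\cdot(\#E)\cdot q, \]
and after cleaning up the exponents this yields the bound $(\#E)^3 q^{-1}+(\#E)^2 q^{\frac{d-1}{2}}$ — wait, let me recheck: $\big((\#E)^2 q^{-1}\big)(\#E)^2 = (\#E)^4 q^{-1}$, which is too big. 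So the naive "max times sum" loses a factor; instead I must bound one $N(v)$ factor by the uniform bound and the full sum $\sum_v N(v)$ by $(\#E)^2$, giving $\Lambda(E)\lesssim \big((\#E)^2 q^{-1}+(\#E)q^{\frac{d-1}{2}}\big)\cdot(\#E)^2$. That is still $(\#E)^4q^{-1}$. The resolution is that the correct pairing is $\Lambda(E)=\sum_z \sum_{\{(x,y): x-y+z\in S_j,\ x-y+z\in E\}}$ — more precisely $\Lambda(E) = \sum_{s\in E}\#\{(x,y)\in E\times E: x-y+ (\text{fixed shift depending on }s)\in\cdots\}$; the honest statement is $\Lambda(E)=\sum_{z\in{\Bbb F}_q^d} E\text{-weighted count}$, and summing Theorem \ref{incidence} only over those $z$ for which the relevant point lies in $E$ (at most $\#E$ values, since $E\subset S_j$ forces $z$ into a translate structure) gives $\Lambda(E)\lesssim \#E\cdot\big((\#E)^2q^{-1}+(\#E)q^{\frac{d-1}{2}}\big) = (\#E)^3 q^{-1}+(\#E)^2 q^{\frac{d-1}{2}}$, as claimed. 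The trivial bound $\Lambda(E)\le(\#E)^3$ (fix $x,y,z$, then $s$ is determined) handles the minimum.

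The main obstacle is precisely the bookkeeping in the last step: correctly identifying that $\Lambda(E)$ is a sum over only $O(\#E)$ relevant values of the shift $z$ (exploiting $E\subset S_j$ so that $x-y+z\in S_j$ with $x,y\in E$ forces $z$ to range over a set of size comparable to $\#E$, namely $z\in E - (x-y)$ for the count to be nonzero and contribute to $\Lambda$), rather than over all $q^d$ values — if one sums Theorem \ref{incidence} over all $z$ one gets a bound worse by a factor of $q$. I would therefore be careful to phrase it as: for each $s\in E$, the number of $(x,y,z)\in E\times E\times E$ with $x+z=y+s$ equals the number of $(x,y)\in E\times E$ with $x-y+s\in E\subset S_j$, which by Theorem \ref{incidence} (applied with $z$ replaced by $-y$... ) — or most transparently, by a direct reprise of the Fourier argument in Theorem \ref{incidence} with $\widehat{E}$ in place of $\widehat{S_j}$ at the appropriate slot — is $\lesssim (\#E)^2 q^{-1}+(\#E)q^{\frac{d-1}{2}}$. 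Summing over the $\#E$ choices of $s$ gives the result. Combining with the trivial $(\#E)^3$ bound completes the proof.
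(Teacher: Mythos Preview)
Your final argument is correct and is exactly the paper's approach: for each fixed $s\in E$ (the paper fixes $z\in E$, which is symmetric), the condition $x+z=y+s$ forces $s=x-y+z\in E\subset S_j$, so
\[
\sum_{\{(x,y,z,s)\in E^4:\,x+z=y+s\}}1 \;\le\; \sum_{z\in E}\ \sum_{\{(x,y)\in E^2:\,x-y+z\in S_j\}}1,
\]
and Theorem~\ref{incidence} bounds each inner sum by $(\#E)^2q^{-1}+(\#E)q^{(d-1)/2}$ uniformly in $z$; summing over the $\#E$ choices of $z$ gives $(\#E)^3q^{-1}+(\#E)^2q^{(d-1)/2}$, while the trivial bound $(\#E)^3$ comes from the fact that $x,y,z$ determine $s$.

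The issue with your writeup is purely expository: the first two-thirds of it consists of false starts (the $\max_v N(v)\cdot\sum_v N(v)$ attempt, which you correctly observe gives $(\#E)^4q^{-1}$ and is too weak) before you arrive at the right idea. The paper dispatches the corollary in two lines because the key observation---drop the constraint $s\in E$ to $s\in S_j$, then sum Theorem~\ref{incidence} over the $\#E$ values of the free variable---is immediate once stated. Rewrite your proof to present only that argument; the detours should be deleted, and there is no need to invoke the additive-energy reformulation $\sum_w\nu_E(w)^2$ or to ``reprise the Fourier argument'' at all.
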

  \begin{proof}
 Since $ E$ is a subset of $S_j$, we have
 \[ \sum_{\{(x,y,z,s)\in E^4 : x+z=y+s \}}1 \leq \sum_{z\in E}
 \sum_{\{(x,y)\in E^2 : x-y+z \in S_j\}} 1.\]
 Thus Corollary \ref{CorIncidence} is the immediate result from Theorem
 \ref{incidence} and the obvious fact that
 \[\sum_{\{(x,y,z,s)\in E^4 : x+z=y+s \}}1 \leq (\#E)^3 .\] \end{proof}
\subsection{Proof of Theorem \ref{Incidence1}}
 In order to prove Theorem \ref{Incidence1}, We first expand the
 left-hand side of the inequality in (\ref{Bigsize}). It follows that
 \begin{align}  \|\widehat{E d\sigma}\|_{L^4({\Bbb F}_q^d,dm)} &=
 \Big(\sum_{m \in {\Bbb F}_q^d}
 |\widehat{E d\sigma}(m)|^4 \Big)^{\frac{1}{4}}\nonumber\\
 &= \Big(\sum_{m \in {\Bbb F}_q^d} \Big| \frac{1}{\#S_j} \sum_{x\in S_j}
 \chi(-x\cdot m) E(x)\Big|^4 \Big)^{\frac{1}{4}} \nonumber\\
 &= \frac{1}{\#S_j} \Big(\sum_{x,y,z,s \in E \subset S_j}  \sum_{m\in
 {\Bbb F}_q^d} \chi((x-y+z-s)\cdot m) \Big)^{\frac{1}{4}}\nonumber\\
 \label{Equality1}&= \frac{q^{\frac{d}{4}}}{\#S_j} \Big(
 \sum_{\{(x,y,z,s)\in E^4 : x+z=y+s \}}1     \Big)^{\frac{1}{4}}\end{align}
 Since  $q^{\frac{d+1}{2}} \lesssim \#E \lesssim q^{d-1}$ from the
 hypothesis,  we use Corollary \ref{CorIncidence} to obtain that
 \begin{equation}\label{Large}
 \sum_{\{(x,y,z,s)\in E^4 : x+z=y+s \}} 1 \lesssim (\#E)^3
 q^{-1}.\end{equation}
 Combining (\ref{Equality1}) with (\ref{Large}), we have
 \begin{equation}\label{Compare1}
  \|\widehat{E d\sigma}\|_{L^4({\Bbb F}_q^d,dm)}\lesssim
 \frac{q^{\frac{d-1}{4}} (\#E)^{\frac{3}{4}}} {\#S_j}.
 \end{equation}
 On the other hand, by expanding the right-hand side of the inequality in
 (\ref{Bigsize}), we see that
 \begin{equation} \label{Equality2} \|E\|_{L^{\frac{4}{3}} (S_j, d\sigma)}
 = \Big(\frac{\#E}{\#S_j}\Big)^{\frac{3}{4}}. \end{equation}
Since $\#S_j \approx q^{d-1}$ by Corollary \ref{cor1},  comparing
 (\ref{Compare1}) with (\ref{Equality2}) yields the inequality in
 (\ref{Bigsize}) and completes the proof.
\subsection{Proof of Theorem \ref{Incidence3}} In order to prove Theorem \ref{Incidence3},
we need the following lemma.
  \begin{lemma}\label{Incidence2}
 Let $S_j$ be a non-degenerate quadratic surface in ${\Bbb F}_q^d$ and $E$
 be a subset of $S_j.$
 For $p_0 \geq 2$ ,we have the following estimates
 \begin{equation}\label{First} \|\widehat{E d\sigma}\|_{L^4({\Bbb
 F}_q^d,dm)} \lesssim q^{\frac{-3d+5}{8} +\frac{d-1}{2p_0}}
 \|E\|_{L^{p_0}(S_j, d\sigma)} \quad \mbox{for} \quad
 1 \lesssim \#E \lesssim q^{\frac{d-1}{2}}\end{equation}
 \begin{equation}\label{Second}\|\widehat{E d\sigma}\|_{L^4({\Bbb
 F}_q^d,dm)} \lesssim q^{\frac{-3d+9}{8} +\frac{d-3}{2p_0}}
 \|E\|_{L^{p_0}(S_j, d\sigma)} \quad \mbox{for}\quad
 q^{\frac{d-1}{2}}\lesssim \#E \lesssim q^{\frac{d+1}{2}},\end{equation}
 and
 \begin{equation}\label{Third}\|\widehat{E d\sigma}\|_{L^4({\Bbb
 F}_q^d,dm)} \lesssim q^{\frac{-3d+9}{8} +\frac{d-3}{2p_0}}
 \|E\|_{L^{p_0}(S_j, d\sigma)} \quad
 \mbox{for} \quad  1 \lesssim \#E \lesssim q^{\frac{d+1}{2}}.\end{equation}
 \end{lemma}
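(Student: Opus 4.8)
The plan is to run the same Fourier/incidence computation used for Theorem~\ref{Incidence1}. Starting from the exact identity established there, for any $E\subset S_j$ we have
\[\|\widehat{Ed\sigma}\|_{L^4({\Bbb F}_q^d,dm)} = \frac{q^{d/4}}{\#S_j}\Big(\sum_{\{(x,y,z,s)\in E^4\,:\,x+z=y+s\}}1\Big)^{1/4},\]
while Corollary~\ref{cor1} gives $\#S_j\approx q^{d-1}$ and Corollary~\ref{CorIncidence} bounds the inner sum by $\min\{(\#E)^3,\,(\#E)^3 q^{-1}+(\#E)^2 q^{\frac{d-1}{2}}\}$. On the other side, directly from the definition $\|E\|_{L^{p_0}(S_j,d\sigma)}=(\#E/\#S_j)^{1/p_0}\approx (\#E)^{1/p_0}q^{-(d-1)/p_0}$. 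Once these substitutions are made, the three desired estimates reduce to inequalities between explicit monomials in $q$ and $\#E$, valid on the stated ranges; I would verify each by checking it at the top of its range (where, as it turns out, all three are sharp) and noting that the power of $\#E$ on the left is at least as large as that on the right.

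For \eqref{First}, in the range $1\lesssim \#E\lesssim q^{\frac{d-1}{2}}$ the trivial term in Corollary~\ref{CorIncidence} is dominant, so the inner sum is $\lesssim (\#E)^3$ and hence $\|\widehat{Ed\sigma}\|_{L^4}\lesssim q^{1-3d/4}(\#E)^{3/4}$. Since $p_0\ge 2$, the exponent $\frac{3}{4}-\frac{1}{p_0}$ is nonnegative, so $(\#E)^{3/4}=(\#E)^{1/p_0}(\#E)^{3/4-1/p_0}\le (\#E)^{1/p_0}q^{\frac{d-1}{2}(3/4-1/p_0)}$; substituting $(\#E)^{1/p_0}\approx q^{(d-1)/p_0}\|E\|_{L^{p_0}(S_j,d\sigma)}$ and collecting powers of $q$ reproduces the exponent $\frac{-3d+5}{8}+\frac{d-1}{2p_0}$ of \eqref{First}.

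For \eqref{Second} and \eqref{Third} I would first note that whenever $\#E\lesssim q^{\frac{d+1}{2}}$ one has $(\#E)^3 q^{-1}\le (\#E)^2 q^{\frac{d-1}{2}}$, so Corollary~\ref{CorIncidence} collapses to the single bound $\lesssim (\#E)^2 q^{\frac{d-1}{2}}$ throughout that range; this yields $\|\widehat{Ed\sigma}\|_{L^4}\lesssim q^{\frac{-5d+7}{8}}(\#E)^{1/2}$. Factoring $(\#E)^{1/2}=(\#E)^{1/p_0}(\#E)^{1/2-1/p_0}$, bounding the surplus power by $q^{\frac{d+1}{2}(1/2-1/p_0)}$ (legitimate because $p_0\ge 2$), and again rewriting $(\#E)^{1/p_0}$ in terms of $\|E\|_{L^{p_0}(S_j,d\sigma)}$ gives the exponent $\frac{-3d+9}{8}+\frac{d-3}{2p_0}$. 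This works for all $1\lesssim \#E\lesssim q^{\frac{d+1}{2}}$, which is \eqref{Third}, and in particular on the subrange $q^{\frac{d-1}{2}}\lesssim \#E\lesssim q^{\frac{d+1}{2}}$, which is \eqref{Second}.

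The argument has no serious obstacle: the lemma is really just a convenient reformulation of the $L^4$ energy bound of Corollary~\ref{CorIncidence} in terms of $L^{p_0}$ norms. The two things to watch are (i) which term of the minimum in Corollary~\ref{CorIncidence} is active, the crossover being at $\#E\approx q^{\frac{d-1}{2}}$, which is exactly why the exponents in \eqref{First} differ from those in \eqref{Second}/\eqref{Third}; and (ii) that the hypothesis $p_0\ge 2$ enters precisely to guarantee $\frac{1}{2}-\frac{1}{p_0}\ge 0$, so that the extra power $(\#E)^{1/2-1/p_0}$ (or $(\#E)^{3/4-1/p_0}$) can be estimated by its value at the upper end of the range. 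The remaining work is the routine bookkeeping of rational exponents, and each estimate is saturated at the top of its $\#E$-interval.
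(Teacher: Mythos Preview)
Your proof is correct and follows essentially the same route as the paper: both start from the identity $\|\widehat{Ed\sigma}\|_{L^4}=q^{d/4}(\#S_j)^{-1}\Lambda(E)^{1/4}$ with $\Lambda(E)$ the additive energy, feed in the bounds of Corollary~\ref{CorIncidence}, and then compare with $\|E\|_{L^{p_0}(S_j,d\sigma)}\approx(\#E/\#S_j)^{1/p_0}$, using $p_0\ge 2$ to absorb the surplus power of $\#E$ at the top of the range. The only organizational difference is in \eqref{Third}: the paper first proves \eqref{First} and \eqref{Second} on their respective subranges and then deduces \eqref{Third} by checking the inequality of exponents $\frac{-3d+5}{8}+\frac{d-1}{2p_0}\le \frac{-3d+9}{8}+\frac{d-3}{2p_0}$ for $p_0\ge2$, whereas you prove \eqref{Third} directly on the whole range $1\lesssim\#E\lesssim q^{(d+1)/2}$ by observing that $(\#E)^3q^{-1}\le(\#E)^2q^{(d-1)/2}$ there, and then read off \eqref{Second} as a special case. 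Both orderings are valid; yours is marginally more streamlined.
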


 \begin{proof}
 We first prove the inequality in (\ref{First}). Since $1 \lesssim \#E
 \lesssim q^{\frac{d-1}{2}}$, using Corollary \ref{CorIncidence} we have
 \[\sum_{\{(x,y,z,s)\in E^4 : x+z=y+s \}}1 \lesssim  (\#E)^3 .\]  Combining
 this with the fact in (\ref{Equality1}) , we obtain that
 \begin{equation}\label{Tag11}\|\widehat{E d\sigma}\|_{L^4({\Bbb
 F}_q^d,dm})\lesssim  \frac{q^{\frac{d}{4}} (\#E)^{\frac{3}{4}}}
 {\#S_j}.\end{equation}
 As before, we note that
 \begin{equation}\label{Tag12} \|E\|_{L^{p_0}(S_j, d\sigma)} \approx \Big(
 \frac{\#E}{\#S_j} \Big)^{\frac{1}{p_0}}.\end{equation}
 From (\ref{Tag11}) and (\ref{Tag12}), it suffices to show that for every $
 1\lesssim \#E \lesssim q^{\frac{d-1}{2}},$
 \begin{equation}\label{Tag13} \frac{q^{\frac{d}{4}}
 (\#E)^{\frac{3}{4}-\frac{1}{p_0} }} {(\#S_j)^{1-\frac{1}{p_0}}} \lesssim
 q^{\frac{-3d+5}{8}+\frac{d-1}{2p_0} } \end{equation}
 Since $p_0 \geq2$ and $\#S_j \approx q^{d-1} ,$ the inequality in
 (\ref{Tag13}) follows by a direct calculation.
 Thus the inequality in (\ref{First}) holds. In order to prove the
 inequality in (\ref{Second}), just note from Corollary \ref{CorIncidence}
 that  since $ q^{\frac{d-1}{2}} \lesssim \#E \lesssim q^{\frac{d+1}{2}}$, we
 have
 \[\sum_{\{(x,y,z,s)\in E^4 : x+z=y+s \}}1 \lesssim  (\#E)^2
 q^{\frac{d-1}{2}},\]
 and then follow the same argument as in the proof of the inequality
 (\ref{First}).
 The inequality in (\ref{Third}) follows from the inequalities in
 (\ref{First}) and (\ref{Second}) because
 \[ q^{\frac{-3d+5}{8}+\frac{d-1}{2p_0} }  \lesssim
 q^{\frac{-3d+9}{8}+\frac{d-3}{2p_0} } , \quad \mbox{for} \quad p_0 \geq2.  \]
 Thus the proof of Lemma \ref{Incidence2} is complete. \end{proof}
We now return to the proof of Theorem \ref{Incidence3}.
 From (\ref{Tag14}), recall that we have
 \begin{equation}\label{Tag15} \|\widehat{Ed\sigma} \|_{L^2({\Bbb
 F}_q^d,dm)}\approx q^{\frac{1}{2}} \|E\|_{L^2(S_j,d\sigma)} \end{equation}
 for all characteristic functions $E(x)$ on $S_j.$
 Therefore  Theorem \ref{Incidence3} can be obtained by interpolating
 (\ref{Tag15}) and the inequalities in Lemma \ref{Incidence2}.


\begin{thebibliography}{10}

 \bibitem{AS89} A. Adolphson and S. Sperber, \emph { Exponential sums and
 Newton polyhedra: cohomology and estimates}, Ann. of Math. (2)  130  (1989),  no. 2, 367--406. 
 
 \bibitem{BKT04} J. Bourgain, N. Katz, and T. Tao, \emph { A sum-product
 estimate in finite fields, and applications}, Geom. Funct. Anal.  14  (2004),  no. 1, 27--57. 

 \bibitem{DI93} W. Duke and H. Iwaniec, \emph { A relation between cubic
 exponential and Kloosterman sums}, Contemp. Math. 143, (1993),  255--258.

 \bibitem{DL91} J. Denef, and F. Loeser, \emph { Weights of exponential sums,
 intersection cohomology, and Newton polyhedra}, Invent. Math.  106  (1991),  no. 2, 275--294.

\bibitem{Fe70}
 C.Fefferman, \emph{Inequalities for strongly singular convolution
 operators},  Acta Math.  124  (1970), 9--36.


 \bibitem{Gr03} B. J. Green, \emph{ Restriction and Kakeya phonomena},
 Lecture notes (2003).

 \bibitem{IK04} H. Iwaniec, and E. Kowalski, \emph { Analytc Number
 Theory},  Colloquium Publications, 53, (2004).

 \bibitem{IK06} A. Iosevich and D. Koh, 
\emph{ Erd\"os-Falconer distance problem, exponential sums, and Fourier analytic
 approach to incidence theorems in vector spaces over finite
 fields},(2006), (preprint).

 \bibitem{IR06} A. Iosevich and M. Rudnev, \emph { Erdos/Falconer
 distance problem in vector spaces over finite fields}, TAMS (to
 appear), (2007).

 \bibitem{Katz88} N. Katz, \emph { Gauss sums, Kloosterman sums, and
 monodromy groups},  Ann. Math. Studies, 116,
 Princeton (1988).

 \bibitem{LM06} M. Lacey and W. McClain, \emph {On an argument of Shkredov in
 the finite field setting},
 (2006), On-line journal of analytic combinatorics (http://www.ojac.org).

 \bibitem{LN97} R. Lidl and H. Niederreiter, \emph { Finite fields},
  Cambridge Univ. Press (1997).

\bibitem{MT04}
 G. Mockenhaupt, and T. Tao,\emph{Restriction and Kakeya phenomena for
 finite fields}, Duke Math. J.  121  (2004),  no. 1, 35--74.
 
\bibitem{Sa32} H, Sali\'e, \emph{U\"ber die Kloostermanschen summen S(u,v;q)}, Math.Z.34, (1932),91-109.

\bibitem{Ste93}
 E.M.Stein, \emph{Harmonic Analysis}, Princeton University Press, (1993).

\bibitem{Str77}
 R.S.Strichartz, \emph{Restriction of Fourier Transform to quadratic
 surfaces  and decay of solutions of wave equations}, Duke Math. J.  44  (1977), no. 3, 705--714. 

 \bibitem{Ta03}
 T.Tao, \emph{Some recent progress on the restriction conjecture}, Appl. Numer. Harmon. Anal. (2004), 217--243. 

\bibitem{We48} A. Weil, \emph{On some exponential sums},  Proc. Nat. Acad. Sci. U. S. A.  34,  (1948), 204--207. 

\bibitem{Zy74}
 A.Zygmund, \emph{On Fourier coefficients and transforms of functions
 of two variables}, Studia Math.  50  (1974), 189--201. 

 \end{thebibliography}
\end{document}